\newcommand{\R}{\mathbb{R}}
\newcommand{\N}{\mathbb{N}}
\newcommand{\Z}{\mathbb{Z}}
\newcommand{\Q}{\mathbb{Q}}
\newcommand{\C}{\mathbb{C}}
\newcommand{\bbH}{\mathbb{H}}
\newcommand{\Diag}{\mathrm{Diag}}
\newcommand{\tr}{\mathrm{tr}}
\newcommand{\arcosh}{\mathrm{arcosh}}
\newcommand{\Tr}{\mathrm{Tr}}
\newcommand{\GL}{\mathrm{GL}}
\newcommand{\SL}{\mathrm{SL}}
\newcommand{\PSL}{\mathrm{PSL}}
\newcommand{\Ha}{\mathrm{\textbf{H}}}
\newcommand{\RP}{\mathbb{RP}}
\newcommand{\Lim}{\mathcal{L}}
\newtheorem{theorem}{Theorem}
\newtheorem{proposition}[theorem]{Proposition}
\newtheorem{corollary}[theorem]{Corollary}
\newtheorem{lemma}[theorem]{Lemma}
\newtheorem*{prop}{Proposition}
\newtheorem*{thm}{Theorem}
\theoremstyle{definition}
\theoremstyle{remark}
\newtheorem*{rem}{Remark}
\title{Examples of infinite covolume subgroups of $\PSL(2,\R)^r$ with big limit sets.}
\author{Slavyana Geninska}
\address{ Karlsruher Institut f\"ur Technologie, Kaiserstr. 89-93, 76133 Karlsruhe, Germany}
\email{slavyana.geninska@kit.edu}
\address{Universit\'e de Provence, 39 rue F. Joliot Curie, 13453 Marseille, France}
\email{geninska@latp.univ-mrs.fr}
\begin{document}

\begin{abstract}
We provide examples of finitely generated infinite covolume subgroups of $\PSL(2,\R)^r$ with a "big" limit set, e.g. that contains an open subset of the geometric boundary. They are given by the so called semi-arithmetic Fuchsian groups admitting modular embeddings.
\end{abstract}

\maketitle

\pagestyle{plain}
\setcounter{page}{1}

\setcounter{section}{-1}
\section{Introduction}
While lattices are studied very well, only little is known about discrete subgroups of infinite covolume of semi-simple Lie groups. The main class of examples are Schottky groups. In this paper we consider another class of examples for the semi-simple Lie group $\PSL(2,\R)^r$ with $r\geq 2$.

We provide examples of finitely generated infinite covolume subgroups of $\PSL(2,\R)^r$ with a "big" limit set, e.g. that contains an open subset of the geometric boundary. They are given by the so called semi-arithmetic Fuchsian groups admitting modular embeddings.

The semi-arithmetic Fuchsian groups constitute a specific class of Fuchsian groups which can be embedded up to commensurability in arithmetic subgroups of $\PSL(2,\R)^r$ (see Schmutz Schaller and Wolfart~\cite{pS00}). These embeddings are of infinite covolume in $\PSL(2,\R)^r$. A trivial example is the group $\PSL(2,\Z)$ that can be embedded diagonally in any Hilbert modular group. Further examples are the other arithmetic Fuchsian groups, the triangle Fuchsian groups and the Veech groups of Veech surfaces.

It is a general question if certain classes of Fuchsian groups can be characterized by geometric means. In \cite{sG10a} was shown that a nonelementary finitely generated subgroup of arithmetic groups in $\PSL(2,\R)^r$ with $r\geq 2$ has the smallest possible limit set if and only if its projection to one factor is a subgroup of an arithmetic Fuchsian group. This means that in this case the limit set gives information about the arithmetic properties of the group. In this article we show that the groups admitting modular embeddings that are not arithmetic are in a sense really very different to the arithmetic Fuchsian groups. It is still an open question what are the limit sets of the embeddings of the other semi-arithmetic Fuchsian groups in  arithmetic subgroups of $\PSL(2,\R)^r$.

\bigskip

This article is organized as follows. Section 1 provides a detailed description of the geometric boundary of $(\bbH^2)^r$, which is the set of equivalence classes of asymptotic geodesic rays. We introduce the notion of the limit set as the part of the orbit closure $\overline{\Gamma(x)}$ in the geometric boundary where $x$ is an arbitrary point in $(\bbH^3)^q\times(\bbH^2)^r$. We also state a natural structure theorem for the regular limit set $\Lim_\Gamma^{reg}$ of discrete nonelementary groups $\Gamma$ due to Link: $\Lim_\Gamma^{reg}$ is the product of the Furstenberg limit set $F_\Gamma$ and the projective limit set $P_\Gamma$.

In Section 2 we prove the following criterion for Zariski dense subgroups of arithmetic groups in $\PSL(2,\R)^r$ (Corollary~\ref{C:CritZariski} in the text). For a group $S$ we denote by $S^{(2)}$ its subgroup generated by the set $\{g^2\mid g\in S\}$.

\begin{prop}
Let $\Gamma$ be a nonelementary subgroup of an irreducible arithmetic group $\Delta$ in $\PSL(2,\R)^r$. Then $\Gamma$ is Zariski dense in $\PSL(2,\R)^r$ if and only if the fields generated by $\Tr(p_i(\Gamma^{(2)}))$ and $\Tr(p_i(\Delta^{(2)}))$ are equal for one and hence for all $i \in \{1,\ldots,r\}$, where $p_i$ denotes the projection to the $i$-th factor.
\end{prop}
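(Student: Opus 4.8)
The plan is to compute the Zariski closure $H:=\overline{\Gamma}^{\,\mathrm{Zar}}$ of $\Gamma$ inside $\PSL(2,\R)^r$ factor by factor and then glue the factors by a Goursat-type argument. Recall that an irreducible arithmetic group $\Delta$ in $\PSL(2,\R)^r$ arises from a quaternion algebra $A$ over a totally real field $k$, the $r$ projections $p_1,\dots,p_r$ being induced by the $r$ archimedean places $\sigma_1,\dots,\sigma_r$ at which $A$ splits; moreover the invariant trace field of $\Delta$ is $k$, so that $\Q(\Tr(p_i(\Delta^{(2)})))=\sigma_i(k)$. Writing $F\subseteq k$ for the invariant trace field of $\Gamma$ (normalised so that $\Q(\Tr(p_i(\Gamma^{(2)})))=\sigma_i(F)$ for every $i$), the asserted condition reads $\sigma_i(F)=\sigma_i(k)$, i.e. simply $F=k$; this is visibly independent of $i$, which already explains the clause ``for one and hence for all $i$''. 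The whole Proposition thus reduces to the equivalence: $\Gamma$ is Zariski dense in $\PSL(2,\R)^r$ if and only if $F=k$.

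First I would check that each projection $p_i(\Gamma)$ is Zariski dense in the $i$-th factor. Since the only proper Zariski-closed subgroups of $\PSL(2,\R)$ are elementary (contained in the normaliser of a torus or of a unipotent subgroup), this amounts to each $p_i(\Gamma)$ being nonelementary, which holds in the situation at hand; this is the point where the nonelementarity of $\Gamma$ enters. Consequently $p_i(H)=\PSL(2,\R)$ for all $i$, so $H$ is a Zariski-closed subgroup of $\PSL(2,\R)^r$ surjecting onto each factor. By the description of such subdirect products of copies of the simple algebraic group $\PSL_2$ (Goursat's lemma), $H$ is the full product unless two of the projections are linked by an automorphism of $\PSL_2$; as $\PSL_2$ has only inner automorphisms, this means: $H\neq\PSL(2,\R)^r$ if and only if there exist $i\neq j$ and $g\in\mathrm{PGL}(2,\R)$ with $p_j(\gamma)=g\,p_i(\gamma)\,g^{-1}$ for all $\gamma\in\Gamma$.

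Next I would translate this non-conjugacy condition into the language of trace fields. Passing to $\Gamma^{(2)}$ removes the sign ambiguity of the lift to $\SL_2$ (the square of an element of $\PSL_2$ has a canonical lift to $\SL(2,\R)$), so $p_i$ and $p_j$ restricted to $\Gamma^{(2)}$ are honest $\SL_2$-representations. By character rigidity for irreducible $\SL_2$-representations they are conjugate precisely when they have equal characters, i.e. when $\Tr(p_i(\gamma^2))=\Tr(p_j(\gamma^2))$ for all $\gamma$. Since $\Tr(p_i(\gamma^2))=\sigma_i(t_\gamma)$ with $t_\gamma\in F$ running over a generating set of $F$, this equality of characters is exactly the equality of restricted embeddings $\sigma_i|_F=\sigma_j|_F$. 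Combining with the previous step, $\Gamma$ is Zariski dense if and only if the restrictions $\sigma_1|_F,\dots,\sigma_r|_F$ are pairwise distinct.

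It remains to prove the purely arithmetic statement that $\sigma_1|_F,\dots,\sigma_r|_F$ are pairwise distinct if and only if $F=k$; this is the step I expect to be the main obstacle. The direction $F=k\Rightarrow$ distinct is immediate, since $\sigma_1,\dots,\sigma_r$ are distinct embeddings of $k$. For the converse I would use that $\Gamma$ determines a quaternion algebra $A_\Gamma=F[\Gamma^{(2)}]$ over its invariant trace field $F$, and that the inclusion $\Gamma\subseteq\Delta$ forces $A\cong A_\Gamma\otimes_F k$. A real place $v$ of $k$ lying over a real place $w$ of $F$ then splits $A$ if and only if $w$ splits $A_\Gamma$, so the $r$ splitting places $\sigma_1,\dots,\sigma_r$ of $A$ are exactly the places of $k$ lying over the splitting places of $A_\Gamma$. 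Because $k$ is totally real, every real place $w$ of $F$ has exactly $[k:F]$ real places of $k$ above it; hence if $F\subsetneq k$, i.e. $[k:F]\geq 2$, the $r$ places $\sigma_1,\dots,\sigma_r$ induce only $r/[k:F]<r$ distinct places of $F$, and the pigeonhole principle yields $i\neq j$ with $\sigma_i|_F=\sigma_j|_F$. This proves that the $\sigma_i|_F$ pairwise distinct implies $F=k$ and completes the chain of equivalences. The delicate points to nail down are the base-change identity $A\cong A_\Gamma\otimes_F k$ for the subgroup $\Gamma$ and the bookkeeping of splitting places, together with verifying that each $p_i(\Gamma)$ is genuinely nonelementary.
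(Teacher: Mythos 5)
Your proposal is correct and, for the group-theoretic half, travels essentially the same road as the paper: what you call the Goursat step is exactly the paper's Proposition~\ref{P:ZariskiClosure}, which proves by induction (using Schottky subgroups for normality and openness of the projections for continuity) that a Zariski-closed subgroup of $\PSL(2,\R)^r$ surjecting onto every factor is a product of conjugated diagonals; be aware that this is genuine content rather than a citation --- Goursat's lemma proper is an abstract two-factor statement, and to conclude that the linking isomorphisms are conjugations by elements of $\mathrm{PGL}(2,\R)$ one also needs rigidity of automorphisms of $\PSL(2,\R)$, which the paper establishes via continuity of the projections. Likewise, both you and the paper convert conjugacy of two coordinate projections into agreement of the Galois embeddings on the trace field via trace rigidity (the paper cites Sampson where you invoke character rigidity). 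Where you genuinely diverge is the arithmetic endgame. The paper argues: if the trace field $F$ of $S^{(2)}$ is a proper subfield of $K$, extend the identity embedding of $F$ to some $\phi_i \neq id$ of $K$; since $\phi_i$ fixes every trace of $S^{(2)}$, the group $\phi_i(S^{(2)})$ is still nonelementary, which forces the place $\phi_i$ to be unramified (at a ramified place all traces lie in $[-2,2]$), i.e.\ $i\leq r$, and then conjugacy kills Zariski density. You instead prove the base-change identity $A \cong A_\Gamma \otimes_F k$, deduce that the split real places of $A$ are exactly the places of $k$ lying over split real places of $A_\Gamma$, and finish by a fiber count and pigeonhole. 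Both arguments are sound; the paper's is more elementary (no quaternion-algebra base change), while yours buys more, namely the exact relation between $r$, the degree $[k:F]$ and the number of split places of $A_\Gamma$, and it isolates the role of ramification cleanly. One point you should not gloss over: $\Delta$ is only \emph{commensurable} to $\Gamma(A,\mathcal{O})^*$, so elements of $\Gamma$ need not literally be tuples of Galois conjugates; your normalisation $\Q(\Tr(p_i(\Gamma^{(2)})))=\sigma_i(F)$ only makes sense after first replacing $\Gamma$ by $S^*=\Gamma\cap\Gamma(A,\mathcal{O})^*$ and checking that neither Zariski density nor the trace-field condition changes under passage to this finite-index subgroup, which is precisely the reduction the paper carries out at the start of its proof of Corollary~\ref{C:CritZariski}.
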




In Section 3, we provide examples of ``small" groups with a ``big" limit set, namely groups of infinite covolume for which the projection of the regular limit set into the Furstenberg boundary is the whole Furstenberg boundary. We define semi-arithmetic groups admitting a modular embeding. A cofinite Fuchsian group $S$ that is commensurable to a subgroup of the projection to the first factor of an irreducible arithmetic group $\Delta$ in $\PSL(2,\R)^r$ is said to have a \textit{modular embedding} if  for the natural embedding $f:S \rightarrow \Delta$ there exists a holomorphic embedding $F:\bbH^2 \rightarrow (\bbH^2)^r$ with
$$ F(T z) =  f(T)F(z), \quad \text{for all} \quad T\in S \quad \text{and all} \quad z \in \bbH^2.$$
Examples of semi-arithmetic groups admitting modular embeddings are Fuchsian triangle groups (see Cohen and Wolfart \cite{pC90}).

The next theorem is the main result of this paper. It is a compilation of Lemma~\ref{L:MinPower}, Theorem~\ref{T:BigFurstenberg} and Theorem~\ref{T:BigLimitSet} in the text.

\begin{thm}
Let $\Gamma$ be a subgroup of an irreducible arithmetic group in $\PSL(2,\R)^r$ with $r\geq 2$ such that $p_j(\Gamma)$ is a semi-arithmetic Fuchsian group admitting a modular embedding and $r$ is the smallest power for which $p_j(\Gamma)$ has a modular embedding in an irreducible arithmetic subgroup of $\PSL(2,\R)^r$.  Then 
\begin{itemize}
\item[(i)] the Furstenberg limit set $F_\Gamma$ is the whole Furstenberg boundary $(\partial \bbH^2)^r$,
\item[(ii)] the limit set $\Lim_\Gamma$ contains a subset homeomorphic to $D^{2r-1}$ (the $(2r-1)$-dimensional ball).
\end{itemize}
\end{thm}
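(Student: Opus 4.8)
The plan is to prove the three assertions in the order \emph{(Zariski density)} $\Rightarrow$ (i) $\Rightarrow$ (ii), matching the compilation of Lemma~\ref{L:MinPower}, Theorem~\ref{T:BigFurstenberg} and Theorem~\ref{T:BigLimitSet}. Normalize $j=1$, so that $S:=p_1(\Gamma)$ is a cofinite Fuchsian group (hence $\Gamma\cong S$ is nonelementary), and fix a basepoint $o\in\bbH^2$. The modular embedding furnishes a holomorphic equivariant map $F=(F_1,\dots,F_r)\colon\bbH^2\to(\bbH^2)^r$ with $F_1=\mathrm{id}$ and $F_i(Tz)=\rho_i(T)F_i(z)$, where $\rho_i:=p_i\circ(p_1|_\Gamma)^{-1}$. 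First I would record that $\Gamma$ is Zariski dense in $\PSL(2,\R)^r$: were it not, Corollary~\ref{C:CritZariski} would force the field generated by $\Tr(p_i(\Gamma^{(2)}))$ to be strictly smaller than the one generated by $\Tr(p_i(\Delta^{(2)}))$, and tracing through the trace-field bookkeeping one could realize the same modular embedding inside an irreducible arithmetic subgroup of some $\PSL(2,\R)^{r'}$ with $r'<r$, contradicting the minimality hypothesis on $r$. This is exactly the content of Lemma~\ref{L:MinPower}.

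For part (i) I would exploit that $S$ is a lattice, so its limit set in the first factor is all of $\partial\bbH^2$, and that, since attracting fixed points of hyperbolic $T\in S$ are dense in $\partial\bbH^2$, the equivariant graph $C:=\{(\alpha,\partial F_2(\alpha),\dots,\partial F_r(\alpha)):\alpha\in\partial\bbH^2\}$ is contained in $F_\Gamma$. The difficulty is that $C$ is only a curve and, by the equivariance $\partial F_i(\gamma_1\alpha)=\rho_i(\gamma_1)\partial F_i(\alpha)$, is itself $\Gamma$-invariant, so iterating single elements never escapes $C$. The extra Furstenberg directions must instead be produced from sequences $\gamma_n\to\infty$ whose $r$ coordinates diverge along \emph{unlinked} directions. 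Concretely I would fix a target $(\xi_1,\dots,\xi_r)\in(\partial\bbH^2)^r$, approach $\xi_1$ through the lattice orbit $S\cdot o$, and use the relation $\rho_i(\gamma_1)F_i(o)=F_i(\gamma_1 o)$ together with the Zariski density of $\Gamma$ --- which keeps the conjugate representations $\rho_i$ from being simultaneously conjugate to $\rho_1$, equivalently keeps the groups $\rho_i(S)$ from being linked by M\"obius relations --- to show that, after conditioning the first coordinate to converge to $\xi_1$, the remaining coordinates $\rho_i(\gamma_1)\cdot o$ still accumulate on every prescribed $\xi_i$. Closedness and $\Gamma$-invariance of $F_\Gamma$ then upgrade this to $F_\Gamma=(\partial\bbH^2)^r$.

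Part (ii) is then formal. By Link's structure theorem the regular limit set splits as $\Lim_\Gamma^{reg}=F_\Gamma\times P_\Gamma$ inside the regular boundary, which is homeomorphic to the product of $(\partial\bbH^2)^r$ with the open $(r-1)$-simplex of directions; thus $P_\Gamma$ is a subset of an $(r-1)$-dimensional model. Zariski density of $\Gamma$ forces the projective limit set $P_\Gamma$ to have nonempty interior (Benoist's theorem on limit cones of Zariski-dense groups), so it contains an open $(r-1)$-ball. Combined with $F_\Gamma=(\partial\bbH^2)^r$ from (i), choosing an open chart of $(\partial\bbH^2)^r$ (an open $r$-ball) and multiplying by this open ball in $P_\Gamma$ yields an open $(2r-1)$-ball in $\Lim_\Gamma^{reg}$, hence a copy of $D^{2r-1}$; this is Theorem~\ref{T:BigLimitSet}.

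The main obstacle is the middle step. Producing limit points off the invariant graph $C$ is genuinely a joint-equidistribution statement: one must steer all $r-1$ conjugate coordinates independently while simultaneously pinning the first coordinate near a chosen boundary point, and the only available leverage is the non-arithmeticity packaged in the minimality of $r$ (equivalently, the failure of the $\rho_i(S)$ to be discrete or M\"obius-linked). Making this simultaneous control rigorous across all factors at once, rather than merely pairwise, is where the real work of Theorem~\ref{T:BigFurstenberg} lies.
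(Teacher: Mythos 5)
Your part (ii) and your route to Zariski density are essentially the paper's: Link's splitting $\Lim_\Gamma^{reg}=F_\Gamma\times P_\Gamma$ (Theorem~\ref{T:LinkFP}), Benoist's theorem giving $P_\Gamma$ nonempty interior once $\Gamma$ is Zariski dense, and Corollary~\ref{C:CritZariski} to get that density. (In the paper the trace-field equality is verified concretely: nondiscreteness of $p_i(\Gamma)$ yields a hyperbolic $h$ with $\phi_i(h)$ elliptic of infinite order, so $\phi_i$ moves some trace, while for the ramified places $\phi_i\bigl(\Tr(p_1(\Gamma^{(2)}))\bigr)\subseteq[-2,2]$ is bounded.) One correction of attribution: Lemma~\ref{L:MinPower} is \emph{not} the Zariski-density statement you ascribe to it; it asserts that for $i\neq j$ the projections $p_i(\Gamma)$ are nondiscrete and that $\Q(\tr(g_j^2))=\Q(\Tr(p_j(\Gamma^{(2)})))$ for every hyperbolic $g_j$, and both conclusions are used later.

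The genuine gap is part (i), and you flag it yourself: the ``joint equidistribution'' step --- steering all coordinates $\rho_i(\gamma_1)o$ to arbitrary targets while pinning the first coordinate --- is never proved, and the leverage you propose, Zariski density, cannot suffice. A Schottky subgroup of $\Delta$ whose generators are in general position (all components hyperbolic, obtained via Lemma~\ref{L:Schottky} and ping-pong) is Zariski dense, yet its Furstenberg limit set is totally disconnected, nothing like the full torus. The mechanism the paper actually uses is the nondiscreteness supplied by Lemma~\ref{L:MinPower}: each $p_i(\Gamma)$, $i\neq 1$, contains an elliptic element of infinite order, i.e.\ an irrational rotation of $\partial\bbH^2$. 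Taking a mixed $g=(g_1,g_2)\in\Gamma$ with $g_1$ hyperbolic and $g_2$ such a rotation, suitable powers $g^{n_k}$ drive the first coordinate to the attracting fixed point $\eta_1$ of $g_1$ while $g_2^{n_k}(\xi_2)$ visits a dense subset of $\partial\bbH^2$; hence the entire fiber $\{\eta_1\}\times\partial\bbH^2$ lies in $\overline{\Gamma(\xi)}$. Cofiniteness of $p_1(\Gamma)$ (its limit set is all of $\partial\bbH^2$ and is minimal) then transports this fiber over every first coordinate, giving $F_\Gamma=(\partial\bbH^2)^2$. For $r\geq 3$ the induction needs mixed elements with exactly one elliptic coordinate, and producing them is the content of the technical Lemma~\ref{L:OneElliptic}, proved via density of orbits on the $2$-torus (Hardy--Wright) together with a Galois argument on traces that uses precisely the condition $\Q(\tr(g_1^2))=\Q(\Tr(p_1(\Gamma^{(2)})))$ --- the second conclusion of Lemma~\ref{L:MinPower}, which your outline never invokes. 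So the missing idea is not equidistribution of the lattice orbit at all, but the use of irrational rotations in the nondiscrete factors to fill whole boundary fibers.
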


In Section 4 we consider the case with parabolic elements. In the remark after Proposition~\ref{P:Parabolic} we determine the exact limit set of the embeddings in $\PSL(2,\R)^2$ of the triangle groups with signature $(2,5,\infty)$ and $(5,\infty,\infty)$.

I would like to thank the advisor of my thesis Enrico Leuzinger. I would also like to thank Yves Benoist and George Tomanov for the discussion about the algebraic subgroups of $\SL(2,\R)^r$. 

\section{Background}
\setcounter{theorem}{0}
In this section we provide some basic facts and notations that are needed in the rest of this paper. 

We will change freely between matrices in $\SL(2,\R)$ and their action as fractional linear transformations, namely as elements in $\PSL(2,\R)$.

For $g = \begin{bmatrix} a&b\\c&d \end{bmatrix} \in \PSL(2,\R)$ we set $\tr(g)=|a+d|$. 

For a subgroup $\Gamma$ of $\PSL(2,\R)$ we call 
\[ \Tr(\Gamma) = \{\tr(g) \mid g \in \Gamma \} \]
the \textit{trace set of} $\Gamma$. 

The \textit{translation length} $\ell(g)$ of a hyperbolic isometry $g$ is the distance between a point $x$ on the geodesic fixed by $g$ and its image $g(x)$ under $g$. If $g$ is elliptic, parabolic or the identity, we define $\ell(g):=0$.

The following notion of ``smallness" for subgroups $\Gamma$ of $\PSL(2,\R)$ is important in the subsequent discussion. The group $\Gamma$ is \textit{elementary} if it has a finite orbit in its action on $\bbH^2\cup\R\cup\{\infty\}$. Otherwise it is said to be \textit{nonelementary}. Every nonelementary subgroup of $\PSL(2,\R)$ contains infinitely many hyperbolic elements, no two of which have a common fixed point (see Theorem 5.1.3 in the book of Beardon~\cite{aB95}).

A Schottky group is a finitely generated free subgroup of $\PSL(2,\R)$ that contains only hyperbolic isometries except for the identity. We will mainly deal with two-generated Schottky groups.

For each two hyperbolic isometries without common fixed points, we can find powers of them that generate a Schottky group. This means that every nonelementary subgroup of $\PSL(2,\R)$ has a subgroup that is a Schottky group. A proof of this fact can be found in \cite{sG09}.

A Schottky group contains isometries without common fixed points because it is nonelementary.

\subsection{The geometric boundary of $(\bbH^2)^r$}
For $i=1,\ldots,r$, we denote by $p_i: (\bbH^2)^r \rightarrow \bbH^2$, $(z_1,...,z_{r}) \mapsto z_i$ the $i$-th projection of $(\bbH^2)^r$ into $\bbH^2$. The curve $\gamma: [0, \infty )\rightarrow (\bbH^2)^r$ is a geodesic ray in $(\bbH^2)^r$ if and only if $p_i\circ \gamma$ is a geodesic ray or a point in $\bbH^2$ for each $i = 1, \ldots , r$. A geodesic $\gamma$ is \textit{regular} if $p_i\circ \gamma$ is a nonconstant geodesic in $\bbH^2$ for each $i = 1, \ldots , r$.

Two unit speed geodesic rays $\gamma$ and $\delta$ in $(\bbH^2)^r$ are said to be asymptotic if there exists a positive number $c$ such that $d(\gamma(t), \delta(t)) \leq c$ for all $t \geq 0$. This is an equivalence relation on the unit speed geodesic rays of $(\bbH^2)^r$. For any unit speed geodesic $\gamma$ of $(\bbH^2)^r$ we denote by $\gamma(+\infty)$ the equivalence class of its positive ray. 

We denote by $\partial ((\bbH^2)^r)$ the set of all equivalence classes of unit speed geodesic rays of $(\bbH^2)^r$. We call $\partial ((\bbH^2)^r)$ the \textit{geometric boundary} of $(\bbH^2)^r$. The \textit{regular boundary} $\partial ((\bbH^2)^r)_{reg}$ of $(\bbH^2)^r$ consists of the equivalence classes of regular geodesics. 

The geometric boundary $\partial ((\bbH^2)^r)$ with the cone topology is homeomorphic to the unit tangent sphere of a point in $(\bbH^2)^r$ (see Eberlein \cite{pE96}, 1.7). (For example $\partial \bbH^2$ is homeomorphic to $S^1$.) The homeomorphism is given by the fact that for each point $x_0$ and each unit speed geodesic ray $\gamma$ in $(\bbH^2)^r$ there exists a unique unit speed geodesic ray $\delta$ with $\delta(0) = x_0$ which is asymptotic to $\gamma$.

The group $\PSL(2,\R)^r$ acts on $(\bbH^2)^r$ by isometries in the following way. For $g = (g_1,\ldots, g_{r}) \in \PSL(2,\R)^r$
$$ g:(\bbH^2)^r \rightarrow (\bbH^2)^r, \quad (z_1, \ldots, z_{r}) \mapsto (g_1 z_1, \ldots, g_{r} z_{r}), $$
where $z_i \mapsto g_i z_i$ is the usual action given by linear fractional transformation, $i = 1, \ldots, {r}$.

The action of $\PSL(2,\R)^r$ can be extended naturally to $\partial ((\bbH^2)^r)$. Let $g$ be in $\PSL(2,\R)^r$ and $\xi$ be a point in the boundary $\partial ((\bbH^2)^r)$. If $\gamma$ is a representative of $\xi$, then $g(\xi)$ is the equivalence class of the geodesic ray $g\circ \gamma$.

We call $g$ \textit{elliptic} if all $g_i$ are elliptic isometries, \textit{parabolic} if all $g_i$ are parabolic isometries and \textit{hyperbolic} if all $g_i$ are hyperbolic isometries. In all the other cases we call $g$ \textit{mixed}. 

If at least one $\ell(g_i)$ is different from zero, then we define the \textit{translation direction} of $g$ as $L(g):= (\ell(g_1): \ldots: \ell(g_{r})) \in \RP^{r-1}$.

\subsection{Decomposition of the geometric boundary of $(\bbH^2)^r$}
In this section we show a natural decomposition of the geometric boundary of $(\bbH^2)^r$ and in particular of its regular part. This is a special case of a general construction for a large class of symmetric spaces (see e.g. Leuzinger~\cite{eL92} and Link~\cite{gL02}). This decomposition plays a main role in this article.

Let $x=(x_1,\ldots,x_{r})$ be a point in $(\bbH^2)^r$. We consider the \textit{Weyl chambers} with vertex $x$ in $(\bbH^2)^r$ given by the product of the images of the geodesics $\delta_i:[0,\infty)\rightarrow \bbH^2$ with $\delta_i(0)=x_i$ for $i = 1,\ldots,r$. The isotropy group in $\PSL(2,\R)^r$ of $x$ is $\mathrm{PSO}(2)^r$. It acts simply transitively on the Weyl chambers with vertex $x$. 

Let $W$ be a Weyl chamber with vertex $x$. In $W$, two unit speed geodesics $\gamma(t) = (\gamma_1(t),\ldots,\gamma_{r}(t))$ and $\tilde{\gamma} = (\tilde{\gamma}_1(t),\ldots,\tilde{\gamma}_{r}(t))$ are different if and only if the corresponding projective points $$\left(d_H(\gamma_1(0),\gamma_1(1)):\ldots:d_H(\gamma_{r}(0),\gamma_{r}(1))\right) \text{ and}$$ $$(d_H(\tilde{\gamma}_1(0),\tilde{\gamma}_1(1)):\ldots:d_H(\tilde{\gamma}_{r}(0),\tilde{\gamma}_{r}(1)))$$ 
are different. Here $d_H$ denotes the hyperbolic distance in $\bbH^2$. The point in $\RP^{r-1}$ given by $\left(d_H(\gamma_1(0),\gamma_1(1)):\ldots:d_H(\gamma_{r}(0),\gamma_r(1))\right)$ is a direction in the Weyl chamber and it is the same as $(\left\|v_1\right\|:\ldots:\left\|v_{r}\right\|)$, where $v = (v_1,\ldots,v_{r}):= \gamma'(0)$ is the unit tangent vector of $\gamma$ in $0$. 

In other words we can extend the action of $Iso_x$ to the tangent space at $x$ in $(\bbH^2)^r$ in such a way that $Iso_x$ maps a unit tangent vector at $x$ onto a unit tangent vector at $x$.

Let $v$ be a unit tangent vector at $x$ in $(\bbH^2)^r$. We denote by $v_i$ the $i$-th projection of $v$ on the tangent spaces at $x_i$, $i=1,\ldots,r$. Then all the vectors $w$ in the orbit of $v$ under $Iso_x$ have $\left\|w_i\right\|=\left\|v_i\right\|$. Even more, the orbit of $v$ under the group $\mathrm{PSO}(2)^r$ consists of all unit tangent vectors $w$ at $x$ such that $\left\|w_i\right\|=\left\|v_i\right\|$ for $i=1,\ldots,r$. Therefore if $v$ is tangent to a regular geodesic, then the orbit of $v$ is homeomorphic to $(S^1)^r \cong \left( \partial \bbH^2 \right)^r $. 

The \textit{regular boundary} $\partial ((\bbH^2)^r)_{reg}$ of $(\bbH^2)^r$ consists of the equivalence classes of regular geodesics. Hence it is identified with $\left( \partial \bbH^2 \right)^r \times \RP^{r-1}_+$ where 
$$ \RP^{r-1}_+ := \left\{(x_1:\ldots:x_{r}) \in \RP^{r-1} \mid x_1 > 0, \ldots, x_{r} > 0 \right\}. $$
Here $x_1,..,x_{r}$ can be thought as the norms of the projections of the regular unit tangent vectors on the simple factors of $(\bbH^2)^r$.

$\left( \partial \bbH^2 \right)^r$ is called the \textit{Furstenberg boundary} of $(\bbH^2)^r$. 

We note that the decomposition of the boundary into orbits under the group $Iso_x$ is independent of the point $x$.

\subsection{The limit set of a group}
Let $x$ be a point and $\{x_n\}_{n\in \N}$ a sequence of points in $(\bbH^2)^r$. We say that $\{x_n\}_{n\in \N}$ converges to a point $\xi \in \partial\left((\bbH^2)^r\right)$ if $\{x_n\}_{n\in \N}$ is discrete in $(\bbH^2)^r$ and the sequence of geodesic rays starting at $x$ and going through $x_n$ converges towards $\xi$ in the cone topology. With this topology, $(\bbH^2)^r \cup \partial\left((\bbH^2)^r\right)$ is a compactification of $(\bbH^2)^r$.

Let $\Gamma$ be a subgroup of $\PSL(2,\R)^r$. We denote by $\Gamma(x)$ the orbit of $x$ under $\Gamma$ and by $\overline{\Gamma(x)}$ - its closure. The \textit{limit set} of $\Gamma$ is $\mathcal{L}_\Gamma:= \overline{\Gamma(x)}\cap \partial\left((\bbH^2)^r\right)$. The limit set is independent of the choice of the point $x$ in $(\bbH^2)^r$. The \textit{regular limit set} is $\mathcal{L}_\Gamma^{reg}:=\mathcal{L}_\Gamma \cap  \partial\left((\bbH^2)^r\right)_{reg}$ and the \textit{singular limit set} is $\mathcal{L}_\Gamma^{sing}:=\mathcal{L}_\Gamma \backslash \mathcal{L}_\Gamma^{reg}$. 

We denote by $F_\Gamma$ the projection of $\mathcal{L}_\Gamma^{reg}$ on the Furstenberg boundary $ \left( \partial \bbH^2 \right)^r$ and by $P_\Gamma$ the projection of $\mathcal{L}_\Gamma^{reg}$ on $\RP^{r-1}_+$. The projection $F_\Gamma$ is the \textit{Furstenberg limit set} of $\Gamma$ and $P_\Gamma$ is the \textit{projective limit set} of $\Gamma$.
\medskip

Let $h\in \Gamma$ be a hyperbolic element or a mixed one with only hyperbolic or elliptic components. There is a unique unit speed geodesic $\gamma$ in $(\bbH^2)^r$ such that $h\circ\gamma(t) = \gamma(t + T_h)$ for a fixed $T_h \in \R_{>0}$ and all $t \in \R$. For $y \in \gamma$, the sequence $h^n(y)$ converges to $\gamma(+\infty)$. Hence also for every $x\in (\bbH^2)^r$, the sequence $h^n(x)$ converges to $\gamma(+\infty)$. Thus $\gamma(+\infty)$ is in $\mathcal{L}_\Gamma$. The sequence $h^{-n}(x)$ converges to $\gamma(-\infty):=-\gamma(+\infty)$ and therefore $\gamma(-\infty)$ is also in $\mathcal{L}_\Gamma$. The points $\gamma(+\infty)$ and $\gamma(-\infty)$ are the only fixed points of $h$ in $\Lim_\Gamma$. The point $\gamma(+\infty)$ is the \textit{attractive} fixed point of $h$ and the point $\gamma(-\infty)$ - the \textit{repulsive} fixed point of $h$. 

If $h$ is hyperbolic, then for all $i=1,\ldots,r$, the projection $p_i \circ \gamma$ is not a point. Hence $\gamma$ is regular and $\gamma(+\infty) \in \mathcal{L}_\Gamma^{reg}$. The point $\gamma(+\infty)$ can be written as $(\xi_F,\xi_P)$ in our description of the regular geometric boundary where 
$$\xi_F := (p_1\circ \gamma(+\infty),\ldots,p_{r}\circ \gamma(+\infty))$$ 
is in the Furstenberg boundary and 
$$\xi_P := (d_H(p_1\circ \gamma(0),p_1\circ \gamma(1)) : \ldots :d_H(p_{r}\circ \gamma(0), p_{r}\circ \gamma(1)))$$ 
is in the projective limit set. Here we note that $\xi_P$ is also equal to 
$$(d_H(p_1\circ \gamma(0),p_1\circ \gamma(T_h)) : \ldots :d_H(p_{r}\circ \gamma(0), p_{r}\circ \gamma(T_h))),$$ 
which is exactly the translation direction of $h$.

Thus the translation direction of each hyperbolic isometry $h$ in $\Gamma$ determines a point in the projective limit set $P_\Gamma$. This point does not change after conjugation with $h$ or after taking a power $h^m$ of $h$, because in these cases the translation direction remains unchanged. 

\medskip

Recall that following Maclachlan and Reid \cite{cM03}, we call a subgroup $\Gamma$ of $\PSL(2,\R)$ \textit{elementary} if there exists a finite $\Gamma$-orbit in $\overline{\bbH^2}:=\bbH^2 \cup \partial \bbH^2$ and \textit{nonelementary} if it is not elementary. Since $\bbH^2$ and $\partial \bbH^2$ are $\Gamma$-invariant, any $\Gamma$-orbit of a point in $\overline{\bbH^2}$ is either completely in $\bbH^2$ or completely in $\partial \bbH^2$. 

We call a subgroup $\Gamma$ of $\PSL(2,\R)^r$ \textit{nonelementary} if for all $i = 1, \ldots, r$,  $p_i(\Gamma)$ is nonelementary, and if for all $g \in \Gamma$ that are mixed, the projections $p_i\circ g$ are either hyperbolic or elliptic of infinite order. Since for all $i = 1, \ldots, r$,  $p_i(\Gamma)$ is nonelementary, $\Gamma$ does not contain only elliptic isometries and thus $\Lim_\Gamma$ is not empty.

This definition of nonelementary is more restrictive than the one given by Link in \cite{gL02}. By Lemma~1.2 in \cite{sG10a} if a subgroup $\Gamma$ of $\PSL(2,\R)^r$ is nonelementary (according to our definition), then it is nonelementary in the sense of Link's definition in \cite{gL02}.

The next theorem is a special case of Theorem 3 from the introduction of \cite{gL02}. It describes the structure of the regular limit set of nonelementary discrete subgroups of $\PSL(2,\R)^r$.

\begin{theorem}[\cite{gL02}]
\label{T:LinkFP}
Let $\Gamma$ be a nonelementary discrete subgroup of the group $\PSL(2,\R)^r$ acting on $(\bbH^2)^r$. If $\mathcal{L}_\Gamma^{reg}$ is not empty, then $F_\Gamma$ is a minimal closed $\Gamma$-invariant subset of $(\partial\bbH^2)^r$, the regular limit set equals the product $F_\Gamma \times P_\Gamma$ and $P_\Gamma$ is equal to the closure in $\RP^{r-1}_+$ of the set of translation directions of the hyperbolic isometries in $\Gamma$. 
\end{theorem}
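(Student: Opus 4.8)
The plan is to prove the three assertions --- minimality of $F_\Gamma$, the product decomposition $\Lim_\Gamma^{reg}=F_\Gamma\times P_\Gamma$, and the identification of $P_\Gamma$ --- around a single mechanism: the attractive fixed point of a hyperbolic element $h=(h_1,\ldots,h_r)\in\Gamma$ is the regular boundary point $(\xi_F^+(h),L(h))$, whose projective coordinate is \emph{exactly} the translation direction $L(h)$, and whose Furstenberg coordinate $\xi_F^+(h)=(h_1^+,\ldots,h_r^+)$ transforms equivariantly, $g\,\xi_F^+(h)=\xi_F^+(ghg^{-1})$ for $g\in\Gamma$. Since $L(h)$ is invariant under conjugation and under taking powers, this last identity lets me move the Furstenberg coordinate by the whole group while keeping the projective coordinate fixed, which is precisely what decouples the two factors. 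I would first record, as a lemma proved by a standard approximation argument, that the attractive fixed points $\{(\xi_F^+(h),L(h)):h\in\Gamma\text{ hyperbolic}\}$ are dense in $\Lim_\Gamma^{reg}$; nonelementarity guarantees, via Theorem~5.1.3 in \cite{aB95} applied in each factor together with the existence of Schottky subgroups, a rich supply of such elements with no shared fixed points.

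For the identification of $P_\Gamma$, one inclusion is immediate: each hyperbolic $h$ contributes the regular limit point $(\xi_F^+(h),L(h))$, so the closure of the set of translation directions lies in the closed set $P_\Gamma$. For the reverse inclusion I would take an arbitrary $p\in P_\Gamma$, realize it as the projective coordinate of a regular limit point, write that point as a limit of orbit points $\gamma_n(x)$ with $\gamma_n=(\gamma_{n,1},\ldots,\gamma_{n,r})$ and projective coordinate the displacement direction $\bigl(d_H(x_1,\gamma_{n,1}x_1):\cdots:d_H(x_r,\gamma_{n,r}x_r)\bigr)$, and then approximate this direction by the translation direction of a genuine hyperbolic element produced by a ping-pong (Schottky) construction. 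This is the step I expect to be the main obstacle: it is the product-of-hyperbolic-planes incarnation of the coincidence of the Cartan (displacement) and Jordan (translation) limit cones, and one must control all $r$ coordinates simultaneously so that the constructed element really is regular and carries the prescribed direction. The same technical core also furnishes the density lemma of the first paragraph.

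Minimality of $F_\Gamma$ then follows from north--south dynamics. Each $h_i$ acts on $\partial\bbH^2$ with attracting point $h_i^+$ and repelling point $h_i^-$, so on $(\partial\bbH^2)^r$ one has $h^n\eta\to\xi_F^+(h)$ whenever no coordinate of $\eta$ equals the corresponding $h_i^-$. If $M\subseteq F_\Gamma$ is nonempty, closed and $\Gamma$-invariant, I pick $\eta\in M$; for every hyperbolic $h$ whose repelling points $h_i^-$ all differ from the matching coordinate of $\eta$ we get $\xi_F^+(h)\in M$. As $h$ ranges over this still-dense family, the density lemma forces $M=F_\Gamma$, so $F_\Gamma$ is minimal; in particular every $\Gamma$-orbit in $F_\Gamma$ is dense.

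Finally, for the product decomposition the inclusion $\Lim_\Gamma^{reg}\subseteq F_\Gamma\times P_\Gamma$ is nothing but the definition of the two projections. For the reverse inclusion I fix $(\eta,p)\in F_\Gamma\times P_\Gamma$ and, using the identification of $P_\Gamma$, choose hyperbolic $h$ with $L(h)$ arbitrarily close to $p$. The conjugate $ghg^{-1}$ is hyperbolic with the \emph{same} translation direction $L(h)$ but Furstenberg fixed point $g\,\xi_F^+(h)$, which by minimality ranges over a dense subset of $F_\Gamma$ as $g$ runs through $\Gamma$. Choosing $g$ so that $g\,\xi_F^+(h)$ approximates $\eta$, the regular limit points $(g\,\xi_F^+(h),L(h))$ converge to $(\eta,p)$; since $\Lim_\Gamma$ is closed and the projective coordinates $L(h)\to p$ stay bounded away from the boundary of the open set $\RP^{r-1}_+$, the limit is again regular, giving $(\eta,p)\in\Lim_\Gamma^{reg}$ and hence the desired equality.
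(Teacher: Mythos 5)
First, a point of comparison: the paper does not prove this statement at all --- it is imported as a special case of Theorem 3 of the introduction of Link's dissertation \cite{gL02}. So your attempt has to be measured against what such a proof actually requires, not against an argument in the text.

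Measured that way, your proposal is an honest reduction, not a proof. The parts you do carry out are essentially sound: the conjugation identity $g\,\xi_F^+(h)=\xi_F^+(ghg^{-1})$ combined with the invariance of $L(h)$ under conjugation is exactly the right mechanism for decoupling the two factors, and the final diagonal argument for $F_\Gamma\times P_\Gamma\subseteq\Lim_\Gamma^{reg}$ is fine, since $L(h_n)\to p\in\RP^{r-1}_+$ keeps the limit point regular. But both pillars on which everything rests --- the density of the points $(\xi_F^+(h),L(h))$ in $\Lim_\Gamma^{reg}$, and the inclusion of $P_\Gamma$ in the closure of the set of translation directions --- are deferred to a ``technical core'' that you name but never execute: approximating the displacement (Cartan) direction of a divergent orbit sequence by translation (Jordan) directions of genuine hyperbolic elements, uniformly in all $r$ coordinates. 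That step is not a routine verification; it is the substance of Link's theorem (and of Benoist's work in the general higher-rank setting), and it requires quantitative ping-pong estimates comparing the Cartan projection of a product with the sum of the Cartan projections of its factors, together with transversality control ensuring the element produced is hyperbolic in every coordinate rather than mixed. Nothing in your sketch supplies this.

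Two further points. Even the preliminary claim that $\Gamma$ has a ``rich supply'' of hyperbolic elements does not follow from applying Beardon's Theorem 5.1.3 factorwise: an element of $\Gamma$ whose $i$-th component is hyperbolic may be mixed, with elliptic components of infinite order in other coordinates, and promoting factorwise hyperbolicity to simultaneous hyperbolicity in all factors is itself part of what must be proved (the paper's standing hypotheses and Lemma~\ref{L:Nonempty} are designed precisely around this difficulty). Separately, and more easily fixable: in the minimality argument, your claim that the hyperbolic elements whose repelling points avoid the coordinates of $\eta$ form a ``still-dense family'' is unjustified as stated; the standard repair is to first move $\eta$ by a suitable power $f^n\in\Gamma$, with $f$ hyperbolic and with fixed points disjoint in each coordinate from those of $h$ and from $\eta$, so that no coordinate of $f^n\eta$ is a repelling point of $h$, and only then apply north--south dynamics to conclude $\xi_F^+(h)\in M$ for \emph{every} hyperbolic $h$.
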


\subsection{Irreducible arithmetic groups in $\PSL(2,\R)^r$}
\label{S:DefArithm}
In this section, following Schmutz and Wolfart \cite{pS00} and Borel \cite{aB81}, we describe the irreducible arithmetic subgroups of $\PSL(2,\R)^r$.

Let $K$ be a totally real algebraic number field of degree $n = [K:\Q]$ and let $\phi_i$, $i =1, \ldots, n$, be the $n$ distinct embeddings of $K$ into $\R$, where $\phi_1 = id$.

Let $A = \left(\frac{a,b}{K}\right)$ be a quaternion algebra over $K$ such that for $1 \leq i \leq r$, the quaternion algebra $\left(\frac{\phi_i(a),\phi_i(b)}{\R}\right)$ is \textit{unramified}, i.e. isomorphic to the matrix algebra $M(2, \R)$, and for $r < i \leq n$, it is \textit{ramified}, i.e. isomorphic to the Hamilton quaternion algebra $\Ha$. In other words, the embeddings 
\[\phi_i: K \longrightarrow \R, \quad i = 1,\ldots, r\] 
extend to embeddings of $A$ into $M(2,\R)$ and the embeddings 
\[\phi_i: K \longrightarrow \R, \quad i = r+1,\ldots, n\] 
extend to embeddings of $A$ into $\Ha$. Note that the embeddings $\phi_i$, $i = 1,\ldots, r$, of $A$ into the matrix algebra  $M(2,\R)$ are not canonical. 

Let $\mathcal{O}$ be an order in $A$ and $\mathcal{O}^1$ the group of units in $\mathcal{O}$. Define $\Gamma(A,\mathcal{O}):= \phi_1(\mathcal{O}^1) \subset \SL(2,\R)$. The canonical image of $\Gamma(A,\mathcal{O})$ in $\PSL(2,\R)$ is called a group \textit{derived from a quaternion algebra}. The group $\Gamma(A,\mathcal{O})$ acts by isometries on $(\bbH^2)^r$ as follows. An element $g = \phi_1(\varepsilon)$ of $\Gamma(A,\mathcal{O})$ acts via
\[g: (z_1, \ldots, z_{r}) \mapsto (\phi_1(\varepsilon)z_1, \ldots, \phi_{r}(\varepsilon)z_{r}), \]
where $z_i \mapsto \phi_i(\varepsilon)z_i$ is the usual action by linear fractional transformation, $i = 1, \ldots, {r}$.

For a subgroup $S$ of $\Gamma(A,\mathcal{O})$ we denote by $S^*$ the group
$$\{g^*:= (\phi_1(\varepsilon), \ldots, \phi_{r}(\varepsilon))\mid \phi_1(\varepsilon) = g \in S\}.$$
Instead of $(\phi_1(\varepsilon), \ldots, \phi_{r}(\varepsilon))$, we will usually write $(\phi_1(g), \ldots, \phi_{r}(g))$ or, since $\phi_1$ is the identity, even $(g,\phi_2(g), \ldots, \phi_{r}(g))$. The elements $\phi_1(g), \ldots, \phi_{r}(g)$ are called \textit{$\phi$-conjugates}. 

Note that $g^*$ and $S^*$ depend on the chosen embeddings $\phi_i$ of $A$ into $M(2,\R)$. On the other hand, the type of $g^*$ is determined uniquely by the type of $g$. This is given by the following lemma.

\begin{lemma}[\cite{sG10a}]
Let $S$ be a subgroup of $\Gamma(A,\mathcal{O})$ and $S^*$ be defined as above. For an element $g\in S$ the following assertions are true.

1. If $g$ is the identity, then $g^*$ is the identity.

2. If $g$ is parabolic, then $g^*$ is parabolic.

3. If $g$ is elliptic of finite order, then $g^*$ is elliptic of the same order.

4. If $g$ is hyperbolic, then $g^*$ is either hyperbolic or mixed such that, for $i = 1,\ldots,r$, $\phi_i(g)$ is either hyperbolic or elliptic of infinite order.

5. If $g$ is elliptic of infinite order, then its $\phi$-conjugates are hyperbolic or elliptic of infinite order.
\end{lemma}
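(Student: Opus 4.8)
The plan is to reduce all five cases to two elementary inputs about the embeddings $\phi_i$ and the arithmetic of the unit $\varepsilon$. Recall that a quaternion algebra is central simple over $K$, so each $\phi_i$ is an \emph{injective} ring homomorphism extending the field embedding $\phi_i\colon K\to\R$ (into $M(2,\R)$ for $i\le r$, into $\Ha$ for $i>r$), and that for $\varepsilon\in\mathcal{O}^1$ the reduced trace $t:=\mathrm{Trd}(\varepsilon)$ and reduced norm lie in $K$ with $\det\phi_i(\varepsilon)=\phi_i(\Nrd(\varepsilon))=1$ and $\tr(\phi_i(\varepsilon))=|\phi_i(t)|$. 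Thus, for each $i\le r$, the component $\phi_i(g)\in\SL(2,\R)$ has its type governed by $\phi_i(t)$ via the standard trichotomy for determinant-one matrices: it is parabolic or equal to $\pm I$ when $|\phi_i(t)|=2$, elliptic when $|\phi_i(t)|<2$, and hyperbolic when $|\phi_i(t)|>2$.

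First I would record the order-transfer principle. Since $\phi_i$ is injective, $\phi_i(\varepsilon)^k=\pm I$ holds if and only if $\varepsilon^k=\pm1$ in $A$, hence if and only if $g^k=\phi_1(\varepsilon)^k=\pm I$; consequently the order of $\phi_i(g)$ in $\PSL(2,\R)$ equals the order of $g$ for every $i$. This settles statement~1 at once (if $g=\mathrm{id}$ then $\varepsilon=\pm1$ and every $\phi_i(\varepsilon)=\pm I$) and the ``same order'' assertion of statement~3: if $g$ is elliptic of order $m$, then each $\phi_i(\varepsilon)$ has order $m$, is not $\pm I$, and, being of finite order in $\SL(2,\R)$, must be elliptic of order $m$, as only elliptic elements and the identity have finite order.

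The arithmetic heart is the rationality step excluding parabolicity. For statement~2, if $g$ is parabolic then $|\phi_1(t)|=2$, and since $\phi_1=\mathrm{id}$ this gives $t=\pm2\in\Q$; as every $\phi_i$ fixes $\Q$, we get $|\phi_i(t)|=2$ for all $i$, while $\phi_i(\varepsilon)\neq\pm I$ by the order-transfer principle, so each $\phi_i(g)$ ($i\le r$) is parabolic, i.e.\ $g^*$ is parabolic. The same observation forbids the $\phi$-conjugates of a hyperbolic or infinite-order elliptic $g$ from degenerating to a parabolic: in statements~4 and~5 a relation $|\phi_i(t)|=2$ would, by injectivity of $\phi_i$ on $K$ together with $\phi_i(\pm2)=\pm2$, force $t=\pm2$ and hence $|\phi_1(t)|=2$, contradicting that $g$ is hyperbolic respectively elliptic. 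Therefore $|\phi_i(t)|\neq2$, so each $\phi_i(\varepsilon)$ is hyperbolic or elliptic; and it cannot be elliptic of finite order, for then $\varepsilon$, and hence $g$, would have finite order by the order-transfer principle. This is exactly the dichotomy of statements~4 and~5, and classifying $g^*$ as hyperbolic or mixed then follows by collecting the component types.

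The step I expect to require the most care is precisely this parabolic exclusion, i.e.\ the passage from $|\phi_i(t)|=2$ to $t=\pm2$: it is the rationality of the boundary value $\pm2$, combined with injectivity of the field embeddings, that prevents the $\phi$-conjugates of a hyperbolic or elliptic element from collapsing onto the parabolic locus. This is the only point where the number-theoretic structure is genuinely invoked; everything else is soft linear algebra over $\SL(2,\R)$ plus the order-transfer principle.
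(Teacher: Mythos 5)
Your proof is correct, and it follows what is essentially the canonical argument: the trace trichotomy in $\SL(2,\R)$, the rationality of $\pm 2$ (hence its invariance under all field embeddings $\phi_i$), and the order-transfer via injectivity of the $\phi_i$. Note that this paper does not actually reprove the lemma --- it quotes it from \cite{sG10a} --- and the proof given there proceeds along the same lines as yours, so there is nothing substantive to contrast.
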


Hence the mixed isometries in this setting have components that are only hyperbolic or elliptic of infinite order. This justifies the condition in our definition of nonelementary that the projections of all mixed isometries can be only hyperbolic or elliptic of infinite order.

By Borel \cite{aB81}, Section 3.3, all \textit{irreducible arithmetic subgroups} of the group $\PSL(2,\R)^r$ are commensurable to a $\Gamma(A,\mathcal{O})^*$. They have finite covolume. By Margulis \cite{gM91}, for $r\geq2$, all irreducible discrete subgroups of $\PSL(2,\R)^r$ of finite covolume are arithmetic, which shows the importance of the above construction.


\medskip

P. Schmutz and J. Wolfart define in \cite{pS00} arithmetic groups acting on $(\bbH^2)^r$. An \textit{arithmetic group acting on} $(\bbH^2)^r$ is a group $G$ that is commensurable to a $\Gamma(A,\mathcal{O})$. It is finitely generated because it is commensurable to the finitely generated group $\Gamma(A,\mathcal{O})$. Then by Corollary 3.3.5 from \cite{cM03}, the quaternion algebra $AG^{(2)}=  \{\sum a_i g_i \mid a_i \in \Q(\Tr(G^{(2)})), g_i \in G^{(2)}\}$ of the group $G^{(2)}$ generated by the set $\{g^2 \mid g \in G\}$ is an invariant of the commensurability class of $G$. Hence $AG^{(2)}$ is isomorphic to $A$. By Exercise 3.2, No. 1, in \cite{cM03}, $\mathcal{O}G^{(2)}= \{\sum a_i g_i \mid a_i \in \mathcal{O}_{\Q(\Tr(G^{(2)}))}, g_i \in G^{(2)}\}$ is an order in $AG^{(2)}$. Therefore $G^{(2)}$ is a subgroup of $\Gamma(AG^{(2)},\mathcal{O}G^{(2)})$ and we can define $(G^{(2)})^*$. This explains ``acting on $(\bbH^2)^r$" in the name.


The interest of this approach is that it allows us to consider a subgroup $S$ of $G\subset \PSL(2,\R)$ and then the corresponding subgroups ${S^{(2)}}^*$ and ${G^{(2)}}^*$ of $\PSL(2,\R)^r$. If $S$ is a Fuchsian group of finite covolume, then it is called \textit{semi-arithmetic Fuchsian group}.


\subsection{Properties of nonelementary subgroups of $\PSL(2,\R)^r$}

In this section we cite several results that are used later in the article and whose proofs can be found in \cite{sG10a}.

\begin{lemma}[\cite{sG10a}]
\label{L:Schottky}
Let $\Gamma$ be a nonelementary subgroup of $\PSL(2,\R)^r$. Further let $g$ and $h$ be two hyperbolic isometries in $\Gamma$. Then there are hyperbolic isometries $g'$ and $h'$ in $\Gamma$ with $L(g) = L(g')$ and $L(h) = L(h')$ such that the groups generated by the corresponding components are all Schottky groups (with only hyperbolic isometries).
\end{lemma}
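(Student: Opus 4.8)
The plan is to produce $g'$ and $h'$ as a power of $g$ and a conjugated power of $h$, exploiting that both operations preserve translation directions (as recalled in the text, $L$ is unchanged under conjugation and under passing to a power). Concretely I would look for an element $\delta \in \Gamma$ and positive integers $m,n$ and set $g' := g^m$ and $h' := \delta\, h^n\, \delta^{-1}$. Then $L(g') = L(g)$ and $L(h') = L(h)$ automatically, and both elements are hyperbolic since hyperbolicity of each component is preserved under powers and conjugation. It then remains to arrange that in every factor $i$ the pair of components $g'_i = g_i^m$ (with fixed points $g_i^{\pm}$) and $h'_i = \delta_i h_i^n \delta_i^{-1}$ (with fixed points $\delta_i(h_i^{\pm})$) generates a Schottky group. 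I would achieve this in two steps: first choose $\delta$ so that in each factor the four boundary points $g_i^+, g_i^-, \delta_i(h_i^+), \delta_i(h_i^-)$ are pairwise distinct, and only then choose $m,n$ large.

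For the separation step I would invoke the fact that a group is never a finite union of cosets of subgroups of infinite index (B.~H. Neumann's lemma). Fix an index $i$ and a choice of signs. The set of $\delta \in \Gamma$ with $\delta_i(h_i^{\pm}) = g_i^{\pm}$ is, if nonempty, a single coset of $N_i := \{\delta \in \Gamma : p_i(\delta) \in \mathrm{Stab}_{p_i(\Gamma)}(h_i^{\pm})\}$. Because $p_i(\Gamma)$ is nonelementary, the stabilizer of a boundary point has infinite index: otherwise the orbit of that point would be finite, forcing $p_i(\Gamma)$ to be elementary. Since $[\Gamma : N_i] = [p_i(\Gamma) : \mathrm{Stab}_{p_i(\Gamma)}(h_i^{\pm})]$, the subgroup $N_i$ also has infinite index in $\Gamma$. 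The set $B$ of ``bad'' conjugators that violate the separation for some $i$ and some choice of signs is thus a union of at most $4r$ cosets of infinite-index subgroups, so by Neumann's lemma $\Gamma \setminus B \neq \emptyset$. Any $\delta \in \Gamma \setminus B$ makes the four points distinct in every factor simultaneously.

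With such a $\delta$ fixed, the components $g_i^m$ and $\delta_i h_i^n \delta_i^{-1}$ are hyperbolic with four pairwise distinct fixed points in each factor. I would then choose, for each $i$, disjoint closed arcs around these four points and run the standard ping-pong argument underlying the fact cited from \cite{sG09} that suitable powers of two hyperbolic isometries without common fixed points generate a Schottky group. The point I would stress is the uniform, monotone form of this argument: once the ping-pong inclusions hold for some exponents $m_i, n_i$ with these fixed arcs, they persist for all larger exponents, because each arc around an attracting fixed point is mapped into itself by the corresponding isometry. Taking $m := \max_i m_i$ and $n := \max_i n_i$ then supplies a single pair of exponents for which $\langle g_i^m, \delta_i h_i^n \delta_i^{-1}\rangle$ is a Schottky group in every factor at once, and $g' := g^m$, $h' := \delta h^n \delta^{-1}$ have the required translation directions.

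The genuinely delicate point is the simultaneous separation of fixed points across all $r$ factors: the projections $p_i(\Gamma)$ are coupled, so one cannot prescribe the components of $\delta$ independently. This is exactly what Neumann's lemma resolves, converting $r$ independent ``avoid a coset'' conditions into one statement inside $\Gamma$. After separation is secured, the passage to high common powers and the ping-pong are routine, the only care needed being the uniform (monotone in the exponents) form of the inclusions so that one pair $(m,n)$ works for all factors.
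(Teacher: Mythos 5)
Your proposal is correct, and a comparison with ``the paper's proof'' has to come with a caveat: this paper does not prove Lemma~\ref{L:Schottky} at all --- it imports it from \cite{sG10a}, and the only ingredient it records here is the single-factor fact (cited to \cite{sG09}) that two hyperbolic isometries of $\bbH^2$ without common fixed points have powers generating a Schottky group. Your skeleton, $g'=g^m$ and $h'=\delta h^n\delta^{-1}$ (both operations preserving translation direction and componentwise hyperbolicity) followed by ping-pong in each factor, is the inevitable one and matches the strategy of the cited source; what is genuinely yours is how you solve the one nontrivial point, namely producing a single $\delta\in\Gamma$ that separates fixed points in all $r$ factors simultaneously, even though the components of elements of $\Gamma$ cannot be prescribed independently. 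Your use of B.~H.~Neumann's covering lemma there is sound: for fixed $i$ and signs $\epsilon,\epsilon'$, the set $\{\delta\in\Gamma : p_i(\delta)(h_i^{\epsilon})=g_i^{\epsilon'}\}$ is either empty or a left coset of $N_{i,\epsilon}=\{x\in\Gamma : p_i(x)(h_i^{\epsilon})=h_i^{\epsilon}\}$, and $[\Gamma:N_{i,\epsilon}]$ equals the cardinality of the $p_i(\Gamma)$-orbit of $h_i^{\epsilon}$, which is infinite precisely because $p_i(\Gamma)$ is nonelementary; hence the at most $4r$ bad cosets cannot cover $\Gamma$. The monotonicity point you stress is also correct and correctly justified: a closed arc $U_+$ around the attracting fixed point of a hyperbolic isometry $a$, disjoint from the repelling fixed point, satisfies $a(U_+)\subset U_+$, so the ping-pong inclusions valid for exponents $(m_i,n_i)$ persist for all larger exponents, and $m=\max_i m_i$, $n=\max_i n_i$ give the Schottky configuration in every factor at once. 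Compared with the hands-on construction of a suitable conjugator in \cite{sG10a}, your argument is softer and shorter, requires no case analysis as $r$ grows, and handles degenerate inputs (for instance $g=h$, or $g_i,h_i$ sharing one or both fixed points in some factors) uniformly; the price is the appeal to Neumann's lemma as an external, though elementary, group-theoretic input.
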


\begin{lemma}[\cite{sG10a}]
\label{L:Nonempty}
Let $\Gamma$ be a subgroup of $\PSL(2,\R)^r$ such that all mixed isometries in $\Gamma$ have only elliptic and hyperbolic components and $p_j(\Gamma)$ is nonelementary for one $j\in\{1,\ldots,r\}$. Then $\Gamma$ is nonelementary and $\mathcal{L}_{\Gamma}^{reg}$ is not empty.
\end{lemma}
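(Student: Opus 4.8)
The plan is to establish the two assertions separately: that $\Gamma$ is nonelementary (in the sense of this paper), and that $\Lim_\Gamma^{reg}\neq\emptyset$. For the first, recall that by definition I must check both that $p_i(\Gamma)$ is nonelementary for every $i$ and that every mixed element of $\Gamma$ has all components hyperbolic or elliptic of infinite order. The hypothesis already grants that mixed elements have only elliptic and hyperbolic components, so I only need to upgrade ``elliptic'' to ``elliptic of infinite order''. This I would do by a power argument: a mixed element here necessarily has at least one hyperbolic component (otherwise all components would be elliptic and it would be elliptic, not mixed). If some mixed $g\in\Gamma$ had a component $p_i(g)$ elliptic of finite order $m$, then $g^m$ would have $p_i(g^m)=\mathrm{id}$ together with a (still) hyperbolic component, hence would be mixed with an identity component, contradicting the hypothesis that mixed elements have only elliptic and hyperbolic components.

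Next, to show $p_i(\Gamma)$ is nonelementary for every $i$, I would argue by contradiction using the derived series. The key observation is that every elementary subgroup of $\PSL(2,\R)$ is metabelian, i.e.\ its second derived subgroup is trivial: the relevant types (fixing an interior point, fixing one boundary point, or preserving a boundary pair) are respectively abelian, affine, and abelian-by-$\Z/2$, all of derived length at most two. Suppose $p_i(\Gamma)$ were elementary for some $i$. Since $p_i\colon\Gamma\to p_i(\Gamma)$ is surjective, $p_i(\Gamma'')=p_i(\Gamma)''=\{\mathrm{id}\}$, where $\Gamma''$ denotes the second derived subgroup $[[\Gamma,\Gamma],[\Gamma,\Gamma]]$; thus every $w\in\Gamma''$ has $p_i(w)=\mathrm{id}$. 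On the other hand $p_j(\Gamma)$ is nonelementary, hence contains a Schottky group, i.e.\ a free group of rank two, which is not metabelian; therefore $p_j(\Gamma'')=p_j(\Gamma)''\neq\{\mathrm{id}\}$. Choosing $w\in\Gamma''$ with $p_j(w)\neq\mathrm{id}$, the element $w$ has $p_i(w)=\mathrm{id}$ and $p_j(w)\neq\mathrm{id}$, so it is mixed with an identity component, again contradicting the hypothesis. Hence every $p_i(\Gamma)$ is nonelementary, and together with the previous paragraph $\Gamma$ is nonelementary.

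For the second assertion I would reduce to producing a genuinely hyperbolic element $\gamma\in\Gamma$, i.e.\ one with all components hyperbolic: as explained in Section~1.3, the attractive fixed point of such a $\gamma$ is the endpoint of a regular geodesic and so lies in $\Lim_\Gamma^{reg}$. Starting from $g$ with $p_j(g)$ hyperbolic (which exists since $p_j(\Gamma)$ is nonelementary), the first part shows that each remaining component of $g$ is hyperbolic or elliptic of infinite order. I would then remove the elliptic components one at a time. For an index $i_0$ with $p_{i_0}(g)$ elliptic of infinite order fixing $z_{i_0}\in\bbH^2$, nonelementarity of $p_{i_0}(\Gamma)$ (with its unbounded orbits) supplies $h\in\Gamma$ moving $z_{i_0}$ arbitrarily far; in $g\,hgh^{-1}$ the $i_0$-component is then a product of two equal-angle rotations about points far apart, which is hyperbolic by the classical composition-of-rotations criterion, while in each already-hyperbolic factor a high-power variant $g^N\,hg^Nh^{-1}$ stays hyperbolic by North--South dynamics provided $p_i(h)$ does not preserve the axis of $p_i(g)$. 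Iterating decreases the number of elliptic components to zero.

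The main obstacle is exactly this last simultaneous control: a single $h$ must at once move the elliptic fixed points far apart and avoid preserving the axes in all the hyperbolic factors, and one must guarantee that no hyperbolic factor degenerates in the process. Each individual condition excludes $h$ from a proper subgroup (a point- or axis-stabilizer pulled back from one factor), proper precisely because the corresponding $p_i(\Gamma)$ is nonelementary; the delicate point is to find $h\in\Gamma$ lying outside all of these finitely many proper subgroups simultaneously, for which I would exploit the abundance of elements in the nonelementary (indeed free-subgroup-containing) group $\Gamma$ together with the freedom to pass to high powers and products. Once a genuinely hyperbolic $\gamma$ is produced, $\Lim_\Gamma^{reg}\neq\emptyset$ follows at once, completing the proof.
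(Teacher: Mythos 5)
The paper itself gives no proof of this lemma (it is quoted from \cite{sG10a}), so your argument has to be judged on its own merits. Your split into two assertions is the right one, and the first half of your proof is correct and complete. The power trick (a finite-order elliptic component of a mixed $g$ makes $g^m$ a mixed isometry with an identity component) is exactly what the hypothesis forbids, under the paper's convention that the identity is its own type, neither elliptic nor hyperbolic; that convention is what makes the hypothesis bite, and you use it consistently. The derived-series argument is also sound: elementary subgroups of $\PSL(2,\R)$ are metabelian, $p_i(\Gamma'')=(p_i(\Gamma))''$ because the projections are surjective homomorphisms, a Schottky subgroup of $p_j(\Gamma)$ makes $(p_j(\Gamma))''$ nontrivial, and the resulting $w\in\Gamma''$ with $p_i(w)=\mathrm{id}$ and $p_j(w)\neq\mathrm{id}$ is a forbidden mixed isometry.

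The genuine gap is in the third part, and it sits exactly where you say it does. One step of your iteration needs a single $h\in\Gamma$ and a single exponent $N$ such that simultaneously (i) in every already-hyperbolic factor $i$, $p_i(h)$ avoids the fixed-point coincidences needed for ping-pong, and (ii) in the elliptic factor $i_0$, the rotation centres $z_{i_0}$ and $p_{i_0}(h)(z_{i_0})$ are far apart, where ``far'' depends on $N$: the two rotations have angle $N\theta$, and their product is hyperbolic only when $(\cosh d+1)\sin^{2}(N\theta/2)>2$. Your proposed way out --- each condition excludes $h$ from a proper subgroup, and the abundance of elements in a group containing a free group supplies an $h$ outside finitely many proper subgroups --- fails on both counts. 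The bad sets are not subgroups: $\{h \mid p_i(h)(\xi^{+})=\xi^{-}\}$ is a coset of the pullback of a point stabilizer, and the displacement condition on $z_{i_0}$ is not of subgroup type at all. And abundance is not enough even for genuine subgroups: the free group $F_2$ is the union of three proper subgroups, namely the preimages of the three proper subgroups of its quotient $(\Z/2)^2$, so ``contains $F_2$, hence cannot be covered by finitely many proper subgroups'' is false. Finally, the tension you flag between $N$ and $h$ is real: taking $N$ large to stabilize the hyperbolic factors can make $\sin^{2}(N\theta/2)$ arbitrarily small, destroying the elliptic-factor estimate for any fixed $h$.

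The step can be repaired along your own lines, with tools you have already established. By your second part, $p_{i_0}(\Gamma)$ is nonelementary, so there is $v\in\Gamma$ with $p_{i_0}(v)$ hyperbolic (no claim on its other components, so this is not circular); take $h=v^{n}$. In each hyperbolic factor, a coincidence such as $p_i(v)^{n}(\xi^{+})=\xi^{-}$ can hold for at most one $n$, because the boundary orbit of a non-fixed point under a hyperbolic or infinite-order elliptic element is injective (and if the point is fixed, no $n$ is bad); meanwhile $d\bigl(z_{i_0},p_{i_0}(v)^{n}(z_{i_0})\bigr)\geq n\,\ell(p_{i_0}(v))\to\infty$. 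So a single large $n$ meets all the constraints on $h$ at once. Only then choose $N$: since $\theta$ is an irrational multiple of $2\pi$, there are arbitrarily large $N$ with $\sin^{2}(N\theta/2)>1/2$, and for such $N$ both the ping-pong in the hyperbolic factors and the rotation-product criterion in the $i_0$-factor hold. Without an argument of this kind, your induction step, and hence the nonemptiness of $\Lim_{\Gamma}^{reg}$, is not proved.
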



We define the limit cone of $\Gamma$ to be the closure in $\RP^{r-1}$ of the set of the translation directions of the hyperbolic and mixed isometries in $\Gamma$.

\begin{theorem}[\cite{sG10a}]
\label{T:ConvexAlg}
Let $\Gamma$ be a nonelementary subgroup of an irreducible arithmetic group in $\PSL(2,\R)^r$ with $r \geq 2$. Then $P_\Gamma$ is convex and the closure of $P_\Gamma$ in $\RP^{r-1}$ is equal to the limit cone of $\Gamma$ and in particular the limit cone of $\Gamma$ is convex.
\end{theorem}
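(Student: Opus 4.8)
The plan is to work entirely with translation directions and exploit Theorem~\ref{T:LinkFP}, which identifies $P_\Gamma$ with the closure in $\RP^{r-1}_+$ of the set of translation directions $L(h)$ of the hyperbolic $h\in\Gamma$. Since the closure of a convex set is convex, and since convexity in $\RP^{r-1}_+$ is tested on representatives in the open positive orthant $\R^r_{>0}$, it suffices to prove: for any two hyperbolic isometries $g,h\in\Gamma$, every projective point of the form $[\,t\,\ell(g)+(1-t)\,\ell(h)\,]$ with $t\in[0,1]$ lies in $P_\Gamma$, where $\ell(g):=(\ell(g_1),\dots,\ell(g_r))$ denotes a fixed representative of $L(g)$. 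I would realise these intermediate directions as limits of the directions of the explicit elements $g^m h^n\in\Gamma$.

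The key analytic input is a uniform translation-length estimate in Schottky groups. First apply Lemma~\ref{L:Schottky} to replace $g,h$ by hyperbolic $g',h'$ with $L(g')=L(g)$, $L(h')=L(h)$, and such that in every factor $i$ the isometries $g_i',h_i'$ generate a Schottky group consisting, apart from the identity, of hyperbolic elements. In such a group the word $(g_i')^m(h_i')^n$ is hyperbolic for all $m,n\geq 1$, and a standard ping-pong estimate on the axes gives
\[
\ell\bigl((g_i')^m(h_i')^n\bigr)=m\,\ell(g_i')+n\,\ell(h_i')+O(1),
\]
with error bounded uniformly in $m$ and $n$. Hence $g'^m h'^n$ is hyperbolic with translation direction
\[
L(g'^m h'^n)=\bigl[\,m\,\ell(g_1')+n\,\ell(h_1')+O(1):\cdots:m\,\ell(g_r')+n\,\ell(h_r')+O(1)\,\bigr].
\]
Choosing $m=\lfloor tN\rfloor$ and $n=\lfloor(1-t)N\rfloor$ and letting $N\to\infty$, the bounded errors become negligible after dividing by $N$, so $L(g'^m h'^n)$ converges to $[\,t\,\ell(g')+(1-t)\,\ell(h')\,]$. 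Each such direction is thus a limit of translation directions of hyperbolic elements and so lies in $P_\Gamma$; letting $t$ range over $[0,1]$ fills the whole projective segment, proving that $P_\Gamma$ is convex.

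For the identification with the limit cone, one inclusion is immediate: $P_\Gamma$ is by definition the closure of the hyperbolic translation directions, which is contained in the limit cone (the closure of the hyperbolic and mixed directions), a closed set; hence $\overline{P_\Gamma}\subseteq\text{limit cone}$. For the reverse inclusion I must show that the direction of every mixed isometry $g\in\Gamma$ lies in $\overline{P_\Gamma}$. Write $I=\{i:g_i\text{ hyperbolic}\}$, so $g_i$ is elliptic of infinite order and $\ell(g_i)=0$ for $i\notin I$. Pick a hyperbolic $h\in\Gamma$ and consider $g^m h^n$. In the elliptic factors $i\notin I$, since $(g_i)^m$ fixes a point one obtains a uniform bound $\ell\bigl((g_i)^m(h_i)^n\bigr)\leq n\,\ell(h_i)+C_i$ with $C_i$ independent of $m$; in the hyperbolic factors $i\in I$ the Schottky estimate again yields growth of order $m\,\ell(g_i)$. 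Fixing $n$ and letting $m\to\infty$ along a suitable subsequence, the components with $i\notin I$ stay bounded while those with $i\in I$ grow linearly, so after projectivising the direction tends to $L(g)$. Thus $L(g)\in\overline{P_\Gamma}$, giving $\text{limit cone}\subseteq\overline{P_\Gamma}$ and hence equality; convexity of the limit cone then follows from that of $P_\Gamma$, since the closure of a convex set is convex.

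The main obstacle is twofold and concentrated in the last paragraph. First, one must secure the uniform $O(1)$ translation-length estimate in Schottky groups and its analogue in the elliptic factors, which requires genuine control of the axis geometry under ping-pong. Second, and more delicate, in the mixed case the element $g^m h^n$ must actually be hyperbolic in the factors $i\notin I$ for its direction to belong to the defining set of $P_\Gamma$: since $g_i$ is an irrational rotation, $(g_i)^m$ equidistributes in $\mathrm{PSO}(2)$, and I would use a genericity argument to select the subsequence of $m$ so that $(g_i)^m(h_i)^n$ is simultaneously hyperbolic for all $i\notin I$ while the length estimates persist. Verifying that such a subsequence exists, and that the bounds survive along it, is the technical heart of the argument.
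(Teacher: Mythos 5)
This paper does not actually prove Theorem~\ref{T:ConvexAlg}; it is imported from \cite{sG10a}, so your proposal can only be judged against the standard argument given there, which your convexity half essentially reproduces: reduce to translation directions via Theorem~\ref{T:LinkFP}, put two hyperbolic elements in componentwise Schottky position via Lemma~\ref{L:Schottky}, and use the uniform estimate $\ell(g_i'^m h_i'^n)=m\ell(g_i')+n\ell(h_i')+O(1)$ (a genuine standard fact for Schottky pairs, cf.\ Dal'Bo \cite{fD99}) together with a closure argument inside $\RP^{r-1}_+$. That half is correct.

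The gap is in the mixed-isometry half, exactly where you flag it, and your proposed repair is the wrong tool. First, a smaller point: you cannot invoke Lemma~\ref{L:Schottky} for the pair $(g,h)$ when $g$ is mixed (the lemma is stated for two hyperbolic isometries of $\Gamma$), so in the hyperbolic factors $i\in I$ you must separately exclude the degenerate configuration $h_i^n(\xi^+_{g_i})=\xi^-_{g_i}$, where $\xi^\pm_{g_i}$ are the attracting and repelling fixed points of $g_i$: in that case $\tr(g_i^m h_i^n)$ stays bounded as $m\to\infty$, the product becomes elliptic, and the linear growth you need fails. This is repairable (for each $i$ at most one $n$ is bad, so choose $n$ avoiding finitely many values), but it must be addressed. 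Second, and more seriously: for the elliptic factors you appeal to equidistribution of the rotations $g_i^m$ plus a ``genericity argument.'' Equidistribution of the \emph{tuple} $(g_i^m)_{i\notin I}$ on the corresponding product of circles holds only if the rotation angles together with $1$ are linearly independent over $\Q$ (Weyl), and you have no way to guarantee this; indeed the paper's own Lemma~\ref{L:OneElliptic} needs the hypothesis $\tr(g_k)\neq\tr(g_{k+1})$ and a nontrivial arithmetic argument just to get density of a pair-orbit. Equidistribution in each factor separately does not, by itself, produce a single sequence of exponents that makes all elliptic factors hyperbolic at once. The fix is that you do not need equidistribution, only recurrence: by the pigeonhole principle applied in the torus, there is a sequence $m_k\to\infty$ with $g_i^{m_k}\to\mathrm{id}$ simultaneously for all $i\notin I$, regardless of rational relations among the angles. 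Then $g_i^{m_k}h_i^n\to h_i^n$, which is hyperbolic; since hyperbolicity is an open condition, $g_i^{m_k}h_i^n$ is hyperbolic for large $k$ with bounded translation lengths, while in the factors $i\in I$ (after the first fix) the trace estimate gives $\ell(g_i^{m_k}h_i^n)=m_k\ell(g_i)+O(1)$ along any subsequence. With these two changes your argument closes and yields $L(g)\in\overline{P_\Gamma}$ as claimed.
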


\section{Zariski dense subgroups of $\PSL(2,\R)^r$}
\setcounter{theorem}{0}
In this section we give a number-theoretical characterization of Zariski dense nonelementary subgroups of an arithmetic group in $\PSL(2,\R)^r$ (Corollary~\ref{C:CritZariski}). In order to do this we need a classification of all nonelementary Zariski closed subgroups of $\PSL(2,\R)^r$.
\begin{proposition}
\label{P:ZariskiClosure}
Let $\Gamma$ be a nonelementary subgroup of $\PSL(2,\R)^r$. Then the following holds for the Zariski closure of $\Gamma$:
$$ \overline{\Gamma}^Z=\prod_{i=1}^n \Diag_{k_i}(\PSL(2,\R)),$$
where $\Diag_{k_i}(\PSL(2,\R))$ is a conjugate of the diagonal embedding of $\PSL(2,\R)$ in $\PSL(2,\R)^{k_i}$ and where $k_1+\cdots+k_n=r$.
\end{proposition}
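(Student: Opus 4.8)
The plan is to determine $H:=\overline{\Gamma}^Z$ in three stages: first pin down the image of $H$ in each simple factor, then analyse how $H$ sits inside the product via a Goursat-type argument at the level of Lie algebras, and finally transport the result back to the group.

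First I would show that every projection is onto. Since $p_i(\Gamma)$ is nonelementary, it is not contained in any proper Zariski-closed subgroup of $\PSL(2,\R)$: up to conjugacy such a subgroup lies in a Borel subgroup or in the normalizer of a maximal torus, and both are elementary (a Borel fixes a boundary point, a torus normalizer preserves a two-point set), contradicting the presence in $p_i(\Gamma)$ of hyperbolic elements without common fixed points. Hence $p_i(H)=\overline{p_i(\Gamma)}^Z=\PSL(2,\R)$ for every $i$, so $H$ is a Zariski-closed subgroup of the semisimple group $\PSL(2,\R)^r$ surjecting onto each of its $r$ simple factors.

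Next I would pass to Lie algebras. Write $\mathfrak{h}=\mathrm{Lie}(H)\subseteq\bigoplus_{i=1}^r\mathfrak{sl}(2,\R)$; surjectivity of $p_i$ gives $p_i(\mathfrak{h})=\mathfrak{sl}(2,\R)$ for all $i$. The heart of the argument is the structure theorem for a subalgebra of a finite direct sum of simple Lie algebras that projects onto every summand: it must be a direct sum of diagonal blocks. For two factors one sets $\mathfrak{k}=\mathfrak{h}\cap(\mathfrak{sl}(2,\R)\oplus 0)$; a short bracket computation shows $p_1(\mathfrak{k})$ is an ideal of the simple algebra $\mathfrak{sl}(2,\R)$, hence $0$ or everything, forcing $\mathfrak{h}$ to be either the full product or the graph of an isomorphism $\mathfrak{sl}(2,\R)\to\mathfrak{sl}(2,\R)$. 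Iterating over the $r$ factors yields a partition $\{1,\dots,r\}=I_1\sqcup\dots\sqcup I_n$ and $\mathfrak{h}=\bigoplus_{a=1}^n\mathfrak{d}_a$, where each $\mathfrak{d}_a\subseteq\bigoplus_{i\in I_a}\mathfrak{sl}(2,\R)$ is the graph of a compatible family of isomorphisms; put $k_a=|I_a|$, so $k_1+\dots+k_n=r$. The connected subgroup $H^0$ with Lie algebra $\mathfrak{h}$ is then $\prod_{a=1}^n D_a$, where $D_a$ is the image of $\PSL(2,\R)$ under $g\mapsto(\sigma_i(g))_{i\in I_a}$ for automorphisms $\sigma_i$ of $\PSL(2,\R)$. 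As every automorphism of $\PSL(2,\R)$ is conjugation by an element of $\mathrm{PGL}(2,\R)$, each $D_a$ is a conjugate of $\Diag_{k_a}(\PSL(2,\R))$, which is the asserted form. To conclude $H=H^0$, note that conjugation inside $\PSL(2,\R)^r$ acts factorwise and so cannot permute the factors; thus $H\subseteq N(H^0)$, and since the diagonal $\PSL(2,\R)$ is self-normalizing in $\PSL(2,\R)^{k_a}$ (the ``all components equal'' computation, using that the centralizer of $\PSL(2,\R)$ is trivial) one gets $N(H^0)=H^0$.

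The main obstacle I expect is the clean identification of the diagonal blocks. The Goursat step only produces graphs of abstract Lie-algebra isomorphisms, and these must be matched with honest conjugates of the diagonal; this is exactly where $\mathrm{Aut}(\PSL(2,\R))=\mathrm{PGL}(2,\R)$ enters. One must be careful with the ``outer'' automorphism (conjugation by an element of determinant $-1$): although it is not inner in $\PSL(2,\R)$, it becomes inner over $\C$, so at the level of the $\R$-algebraic group the block is genuinely a conjugate of the diagonal, provided one reads ``conjugate'' as allowing the normalizing $\mathrm{PGL}(2,\R)$-factors. The second delicate point is the normalizer computation $N(H^0)=H^0$, which is what rules out extra connected components and any hidden permutation of the factors.
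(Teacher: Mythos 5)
Your proposal is correct, but it takes a genuinely different route from the paper's proof. The paper never leaves the group level: it argues by induction on the number of factors, taking the kernel $H=\ker(p_1)$ of the first projection, showing each $p_i(H)$ is trivial or all of $\PSL(2,\R)$ by producing inside the normal subgroup $H$ a Schottky group (powers of $h_i$ and of a conjugate $g_i^{-l}h_i^{l}g_i^{l}$), splitting $G=D\times S$ by a commutator argument, and finally identifying the block $D$ as the graph of continuous isomorphisms, hence of conjugations by elements of $\GL(2,\R)$. You instead run the classical Goursat/subdirect-product argument at the Lie algebra level and then recover control of the components through the normalizer computation $N(H^0)=H^0$; that last step is the exact counterpart of what the paper gets for free by working with the full, possibly disconnected, Zariski closure throughout. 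Your version is more structural, generalizes verbatim to products of centerless simple groups, and makes explicit two points the paper treats briefly (that $\mathrm{Aut}(\PSL(2,\R))=\mathrm{PGL}(2,\R)$, matching the paper's ``conjugation by elements of $\GL(2,\R)$'', and why no components or factor permutations can appear). Three small cautions on your write-up. First, the identity $p_i(H)=\overline{p_i(\Gamma)}^{Z}$ presumes that images of morphisms of algebraic groups are closed, which can fail on real points (an image may be a proper finite-index, Zariski-dense subgroup); the clean fix --- which is exactly the paper's opening step --- is to note that $p_i(\mathfrak{h})$ is invariant under $\mathrm{Ad}$ of the Zariski dense group $p_i(\Gamma)$, hence is an ideal of $\mathfrak{sl}(2,\R)$, hence $0$ or everything, and it cannot be $0$ since then $p_i(\Gamma)$ would be finite. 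Second, $H\subseteq N(H^0)$ follows simply because the identity component is normal in $H$; the factorwise-action remark is what you actually need afterwards, to see that any element of $N(H^0)$ preserves each block $D_a=H^0\cap\bigl(\prod_{i\in I_a}\PSL(2,\R)\times\{id\}\bigr)$, after which self-normalization of the (possibly twisted) diagonal, via triviality of the centralizer, finishes. Third, your list of proper Zariski-closed subgroups of $\PSL(2,\R)$ should also include the compact torus $\mathrm{PSO}(2)$ (stabilizer of a point of $\bbH^2$) and finite subgroups; these are elementary too, so your conclusion that nonelementary groups are Zariski dense stands.
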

\begin{proof}
Let $G$ be the Zariski closure of $\Gamma$ and $p_i$, $i=1,\ldots,r$, the projection of $G$ into the $i$-th factor. 

First we show that $p_i(G)=\PSL(2,\R)$ for all $i=1,\ldots,r$. Let $G_0$ be the connected component of the identity. Note that the group $p_i(G_0)$ is connected and nontrivial. As in the proof of the criterion in \cite{fD00} we remark that $p_i(G_0)$ is normalized  by the nonelementary (and hence Zariski dense) $p_i(\Gamma)$. Hence the projection of the Lie algebra of $G_0$ is normalized by the adjoint action of a Zariski dense group and therefore it is the whole $\mathfrak{sl}(2,\R)$. Thus $p_i$ is open and therefore surjective.


We will prove that every Zariski closed subgroup $G$ of $\PSL(2,\R)^r$ such that its projections to the different simple factors are $\PSL(2,\R)$ has the form $\prod_{i=1}^n \Diag_{k_i}(\PSL(2,\R))$ of the statement of the proposition.

Obviously this statement is true for $r=1$. Let us assume that it is true for $r=1,\ldots,m-1$. We prove it for $r=m$.

We consider the projection $p_1$ on the first factor. The kernel $H:=ker(p_1)$ of $p_1$ is a Zariski closed subgroup of $\PSL(2,\R)^{m}$.

\textit{Claim.} The group $p_i(H)$ is either trivial or the whole group $\PSL(2,\R)$.

Let us assume that there is an element $h=(h_1,\ldots,h_{m})$ in $H$ such that $h_i\neq id$. We choose $g=(g_1,\ldots,g_{m}) \in G$ such that $g_i$ and $h_i$ do not have common fixed points. For some powers $l_g,l_h$, the elements $h_i^{l_h}$ and $g_i^{-l_g}h_i^{l_h}g_i^{l_g}$ generate a Schottky group. This Schottky group is even a subgroup of $H$ because $H$ is a normal subgroup of $G$. Hence there is a Zariski dense subgroup of $\PSL(2,\R)$ which normalizes $p_i(H)$ and thus $p_i(H) = \PSL(2,\R)$.

Without loss of generality, let $p_i(H)=\{id\}$ for $i=2,\ldots,k$ and $\PSL(2,\R)$ for $i=k+1,\ldots,m$. Hence $H$ is canonically isomorphic to a Zariski closed subgroup of $\PSL(2,\R)^{m-k}$ and by induction it is equal to the group $S:=\prod_{i=2}^{n} \Diag_{k_i}(\PSL(2,\R))$ where $k_2+\cdots+k_n=m-k$.

The next step is to show that $G = D \times S$ where $D$ denotes the projection of $G$ on the first $k$ factors. For simplicity of the notation we give the proof just in the case $n=2$ and $k_2=2$. The general case is analogous.

Let us assume the converse, namely that there is an element $(d,s,t)\in G$ such that $d \in D$ and $(s,t) \notin S$, i.e $(s,t)$ is different from the element $(s,s_2)\in S$. Hence $a:=(d,id,u:=ts_2^{-1})$ is in $G$ and $u\neq id$. We choose $v\in \PSL(2,\R)$ that does not commute with $u$ and consider the corresponding element $b:=(id, v_1, v)$ in $H$. The commutator $[a,b] = (id, id, uvu^{-1}v^{-1})$ is an element in $H$ that contradicts its structure.

The final step is to show that $D$ is a conjugate of the diagonal embedding of $\PSL(2,\R)$ in $\PSL(2,\R)^k$. Since $ker(p_1|_D)=\{id\}$, $\PSL(2,\R)$ is isomorphic to $D$. And since for each $i=1,\ldots, k$ the projection $p_i$ is surjective, we have that $D= \{(x, \phi_2(x),\ldots,\phi_k(x)) \}$ where $\phi_i$ are surjective homomorphisms. Even more, $\phi_i$ are isomorphisms because if the kernel is not trivial it can only be $\PSL(2,\R)$, which is is obviously not possible.

The projections $p_i|_D$ are continuous isomorphisms and open maps as remarked in the beginning of the proof. Hence $\phi_i$ are continuous isomorphisms and hence conjugations of $\PSL(2,\R)$ by elements in $\GL(2,\R)$.
\end{proof}

For a group $S$ we denote as usual by $S^{(2)}$ its subgroup generated by the set $\{g^2 \mid g \in S\}$. If $S$ is a finitely generated nonelementary subgroup of $\PSL(2,\R)$ then $S^{(2)}$ is a finite index normal subgroup of $S$.

\begin{corollary}
\label{C:CritZariski}
Let $\Gamma$ be a nonelementary subgroup of an irreducible arithmetic group $\Delta$ in $\PSL(2,\R)^r$. Then $\Gamma$ is Zariski dense in $\PSL(2,\R)^r$ if and only if the fields generated by $\Tr(p_i(\Gamma^{(2)}))$ and $\Tr(p_i(\Delta^{(2)}))$ are equal for one and hence for all $i \in \{1,\ldots,r\}$.
\end{corollary}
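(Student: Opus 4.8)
The plan is to read off Zariski density from Proposition~\ref{P:ZariskiClosure} and then convert the resulting splitting of $\overline{\Gamma}^Z$ into an arithmetic condition on traces via the quaternion-algebra structure of $\Delta$. Throughout I would pass to $\Gamma^{(2)}$: it is normal in $\Gamma$ with quotient of exponent two, and since $\overline{\Gamma}^Z$ is connected by Proposition~\ref{P:ZariskiClosure}, we have $\overline{\Gamma^{(2)}}^Z=\overline{\Gamma}^Z$, so $\Gamma$ is Zariski dense if and only if $\Gamma^{(2)}$ is. This reduction is what makes the field $M:=\Q(\{s_\gamma:\gamma\in\Gamma^{(2)}\})\subseteq K$ — where $s_\gamma\in K$ is the reduced trace of the element $\varepsilon_\gamma\in\mathcal{O}^1$ with $p_i(\gamma)=\phi_i(\varepsilon_\gamma)$ — coincide with the field in the statement: indeed $\Q(\Tr(p_i(\Gamma^{(2)})))=\phi_i(M)$ while $\Q(\Tr(p_i(\Delta^{(2)})))=\phi_i(K)$. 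As each $\phi_i$ is injective, $\phi_i(M)=\phi_i(K)$ holds for one index if and only if $M=K$ if and only if it holds for all indices, which is the ``one and hence for all'' clause. So the task is to prove that $\Gamma$ is Zariski dense if and only if $M=K$.

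By Proposition~\ref{P:ZariskiClosure}, $\Gamma^{(2)}$ fails to be Zariski dense exactly when two distinct factors lie in a common diagonal block, i.e.\ when there are $a\neq b$ in $\{1,\dots,r\}$ and $c\in\GL(2,\R)$ with $p_b(\gamma)=c\,p_a(\gamma)\,c^{-1}$ for all $\gamma\in\Gamma^{(2)}$; conversely such a relation traps $\Gamma^{(2)}$ in a proper algebraic subgroup. Since conjugation preserves traces and $p_i(\gamma)$ has trace $\phi_i(s_\gamma)$, such a block forces $\phi_a(s_\gamma)=\phi_b(s_\gamma)$ for every $\gamma\in\Gamma^{(2)}$, hence $\phi_a|_M=\phi_b|_M$ because the $s_\gamma$ generate $M$. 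Conversely, if $\phi_a|_M=\phi_b|_M$ then $p_a$ and $p_b$ are representations of the nonelementary group $\Gamma^{(2)}$ with equal characters, hence conjugate by an element of $\GL(2,\R)$, so the two factors are linked. (Passing to $\Gamma^{(2)}$ also removes the sign ambiguity in $\tr(g)=|a+d|$, since squares have canonical lifts to $\SL(2,\R)$.) Thus $\Gamma$ is not Zariski dense if and only if $\phi_a|_M=\phi_b|_M$ for some $a\neq b$ in $\{1,\dots,r\}$.

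It remains to show that such a coincidence among the first $r$ embeddings occurs precisely when $M\subsetneq K$. One direction is immediate: if $\phi_a|_M=\phi_b|_M$ with $\phi_a\neq\phi_b$, then $[K:M]\geq 2$. For the converse — the core of the argument — I would bring in the invariant quaternion algebra $B$ of $\Gamma^{(2)}$, a quaternion algebra over $M$ with $B\otimes_M K=A$ (using the standard facts of \cite{cM03} on invariant quaternion algebras of nonelementary groups, together with $\Gamma^{(2)}\subseteq\Delta$). Base change preserves ramification at real places, so $A$ is unramified at $\phi_i$ if and only if $B$ is unramified at $\phi_i|_M$. By hypothesis $A$ is unramified at $\phi_1,\dots,\phi_r$ and ramified at $\phi_{r+1},\dots,\phi_n$; hence the indices $i\leq r$ are exactly those for which $B$ is unramified at $\phi_i|_M$. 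Because $K$ is totally real, every real place $w$ of $M$ has exactly $[K:M]$ real extensions to $K$, and by base change all of them are unramified for $A$ as soon as $B$ is unramified at $w$. Consequently $r$ equals $[K:M]$ times the number of real places of $M$ at which $B$ is unramified; if $[K:M]\geq 2$ the map $i\mapsto\phi_i|_M$ on $\{1,\dots,r\}$ is therefore at least two-to-one onto its image, producing a pair $a\neq b$ with $\phi_a|_M=\phi_b|_M$. This closes the chain of equivalences and proves the corollary.

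The main obstacle is precisely this last step: the block decomposition of $\overline{\Gamma}^Z$ tells us only that two projections are conjugate, and a naive Galois count does not force the coincidence to occur among the \emph{unramified} places $\phi_1,\dots,\phi_r$. What rescues the argument is the base-change identity $B\otimes_M K=A$ combined with the fact that a totally real extension splits every real place into $[K:M]$ real places; this rules out the otherwise plausible ``bad configuration'' in which $M\subsetneq K$ while $\phi_1|_M,\dots,\phi_r|_M$ stay pairwise distinct. The trace-rigidity step and the reduction to $\Gamma^{(2)}$ that keeps the trace field well defined are comparatively routine, but both are needed for the equivalences to line up.
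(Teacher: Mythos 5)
Your proposal is correct, and its overall architecture is the same as the paper's: both read off failure of Zariski density from Proposition~\ref{P:ZariskiClosure} as the existence of a nontrivial diagonal block, translate that block into a coincidence $\phi_a|_M=\phi_b|_M$ of two of the first $r$ embeddings on the invariant trace field $M$ (conjugation preserves traces in one direction; character rigidity \`a la Sampson~\cite{jS76} produces the conjugating matrix in the other), and then must show that $M\subsetneq K$ forces such a coincidence \emph{among the unramified places} $\phi_1,\ldots,\phi_r$. Where you genuinely differ is in this last, decisive step. The paper anchors at the identity embedding: it picks $\phi_i\neq id$ with $\phi_i|_M=id|_M$ and rules out $i>r$ by noting that $\phi_i$ then preserves the unbounded traces of the hyperbolic elements of $S^{(2)}$, which is impossible at the ramified places $\phi_{r+1},\ldots,\phi_n$, where the algebra embeds in $\Ha$ and reduced traces of units lie in $[-2,2]$; the whole argument stays at the level of traces. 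You instead invoke the invariant quaternion algebra $B$ over $M$, the base-change identity $A\cong B\otimes_M K$, and the compatibility of real ramification with restriction of places, and then count: $r=[K:M]\cdot\#\{\text{unramified real places of }B\}$, so restriction is $[K:M]$-to-one on $\{1,\ldots,r\}$. The two mechanisms are mathematically equivalent (ramification of $A$ at a real place is exactly boundedness of the traces there), but yours is more structural and makes transparent why your ``bad configuration'' cannot occur, at the cost of importing the machinery of \cite{cM03}; the paper's is more elementary, needing only the single coincidence $\phi_i|_M=\phi_1|_M$, which is in effect the special case of your count at the place $\phi_1|_M$. Two small repairs your write-up would need: the lift $\varepsilon_\gamma\in\mathcal{O}^1$ is not literally available, since $\Gamma\leq\Delta$ is only \emph{commensurable} with $\Gamma(A,\mathcal{O})^*$ --- either intersect, as the paper does with $S^*=\Gamma\cap\Gamma(A,\mathcal{O})^*$, or replace $\mathcal{O}$ by the order $\mathcal{O}\Delta^{(2)}$ in $A\Delta^{(2)}\cong A$; and your one-line deduction of $\overline{\Gamma^{(2)}}^Z=\overline{\Gamma}^Z$ uses not just connectedness but also perfectness of the closure (its abelian quotients are trivial), which the explicit form given by Proposition~\ref{P:ZariskiClosure} does supply. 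Neither affects the correctness of the argument.
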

\begin{proof}
Since $\Delta$ is arithmetic, it is commensurable with an arithmetic group derived from a quaternion algebra $\Gamma(A,\mathcal{O})^*$. Hence there is $k\in\N$ such that, for each $g = (g_1,\ldots,g_{q+r})$ in $\Gamma$, $g^k$ is in $\Gamma(A,\mathcal{O})^*$.

There is a subgroup $S$ of $\Gamma(A,\mathcal{O})$ such that $S^* = \Gamma \cap \Gamma(A,\mathcal{O})^*$. The group $\Gamma$ is commensurable with its subgroup $S^*$. Hence $p_1(\Gamma)$ and $S$ are also commensurable. The group $S^*$ is finitely generated because it is a finite index subgroup of the finitely generated group $\Gamma$. (This follows from the Schreier Index Formula, see for example the book of Stillwell \cite{jS93}, 2.2.5.) The group $S^*$ is also nonelementary because $\Gamma$ is nonelementary: Let $g$ and $h$ be two loxodromic isometries that generate a Schottky group in $\Gamma$. The isometries $g^{k}$ and $h^{k}$ are in $S^*$. Then $g^{k}$ and $h^{k}$ generate a Schottky subgroup of $S^*$.

Thus $S$ is a finitely generated nonelementary subgroup of $\Gamma(A,\mathcal{O})$.

We prove now the statement for $S^*$ and $\Gamma(A,\mathcal{O})^*$. We prove it for the first projection, i.e. we show that $S^*$ is Zariski dense if and only if the trace field of $S^{(2)}$ is the same as the trace field of $\Gamma(A,\mathcal{O})^{(2)}$. The general case is analogous.

First we assume that $(S^{(2)})^*$ is not Zariski dense. By Proposition~\ref{P:ZariskiClosure} there is in its Zariski closure at least one block $\Diag_{k}(\PSL(2,\R))$ with $k \geq 2$. Hence there are $\phi_i$ and $\phi_j$ with $i\neq j$ such that $\phi_i(\Tr(S^{(2)}))=\phi_j(\Tr(S^{(2)}))$. Since $\phi_i$ and $\phi_j$ are different for the trace field of $\Gamma(A,\mathcal{O})^{(2)}$, it follows that the trace field of $S^{(2)}$ is not the same as the trace field of $\Gamma(A,\mathcal{O})^{(2)}$.

In order to prove the converse we assume that the trace field $F$ of $S^{(2)}$ is not the same as the trace field $K$ of $\Gamma(A,\mathcal{O})^{(2)}$, i.e. it is a nontrivial subfield. Hence there is a $\phi_i\neq id$ such that $\phi_i|_F = id|_F$. Since $\phi_i(S^{(2)})$ is nonelementary, the group $\phi_i(\Gamma(A,\mathcal{O})^{(2)})$ is also nonelementary and hence $i \leq r$. Therefore the groups $S^{(2)}$ and $\phi_i(S^{(2)})$ are conjugated by a matrix in $\GL(2,\R)$ (see for example Sampson \cite{jS76}). This implies that $(S^{(2)})^*$ is not Zariski dense.
\end{proof}

\section{The limit set of modular embeddings of Fuchsian groups}
\label{S:Modular}
\setcounter{theorem}{0}
As the following theorem shows the projection of the regular limit set into the Furstenberg boundary of the embedding in a lattice in $\PSL(2,\R)^r$ of an arithmetic Fuchsian group is homeomorphic to a circle. This is an exemple of a ``small" group with a ``small" limit set.

\begin{theorem}[\cite{sG10a}]
\label{T:MainCharactAlgR}
Let $\Delta$ be an irreducible arithmetic subgroup of $\PSL(2,\R)^r$ with $r\geq2$ and $\Gamma$ a finitely generated nonelementary subgroup of $\Delta$. Then $\mathcal{L}_{\Gamma}$ is homeomorphically embedded in a circle if and only if $p_j(\Gamma)$ is contained in an arithmetic Fuchsian group for some $j\in\{1,\ldots,r\}$. 
\end{theorem}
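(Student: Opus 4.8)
The plan is to use Link's product decomposition of the regular limit set to reduce the statement to two essentially independent questions — one about the projective limit set $P_\Gamma$ and one about the Furstenberg limit set $F_\Gamma$ — and then to translate the resulting degeneracy into the number-theoretic language of Corollary~\ref{C:CritZariski}. First I would reduce to the case $\Gamma = S^*$ for a finitely generated nonelementary $S \subseteq \Gamma(A,\mathcal{O})$: replacing $\Gamma$ by the finite-index subgroup $\Gamma \cap \Gamma(A,\mathcal{O})^*$ changes neither $\Lim_\Gamma$ nor the truth of ``$p_j(\Gamma)$ is contained in an arithmetic Fuchsian group'', and afterwards every element of $\Gamma$ has the shape $(g,\phi_2(g),\dots,\phi_r(g))$. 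By Lemma~\ref{L:Nonempty} the group $\Gamma$ is nonelementary with $\Lim_\Gamma^{reg}\neq\varnothing$, so Theorem~\ref{T:LinkFP} gives $\Lim_\Gamma^{reg}=F_\Gamma\times P_\Gamma$, where $P_\Gamma$ is convex by Theorem~\ref{T:ConvexAlg} and $F_\Gamma$ is uncountable (its projection to the first factor is a closed $S$-invariant subset of $\partial\bbH^2$, hence contains the perfect limit set of the nonelementary group $S$).

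The topological heart of the forward implication is the observation that a circle cannot contain the product of an uncountable set with a nondegenerate interval. Assuming $\Lim_\Gamma$ embeds in $S^1$, if $P_\Gamma$ were not a single point then, being convex, it would contain a nondegenerate interval $I$; then $\Lim_\Gamma^{reg}$ would contain a copy of $F_\Gamma\times I$, i.e.\ uncountably many pairwise disjoint arcs $\{\xi\}\times I$. An embedded copy of this in $S^1$ would produce uncountably many pairwise disjoint nondegenerate subarcs of $S^1$, which is impossible. Hence $P_\Gamma$ is a single point, so the limit cone of $\Gamma$ is a single point and all hyperbolic elements of $\Gamma$ have proportional translation lengths.

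It remains to convert ``$P_\Gamma$ is a point'' into arithmeticity, and here I would run the Zariski machinery of Section 2. Proportionality of translation lengths rules out two independent blocks in the Zariski closure, so by Proposition~\ref{P:ZariskiClosure} one is forced into the single maximally-degenerate block $\overline{\Gamma}^{Z}=\Diag_r(\PSL(2,\R))$; equivalently, all the embeddings $\phi_1,\dots,\phi_r$ restrict to the identity on the trace field $k$ of $S^{(2)}$, so that $\phi_i(S^{(2)})$ is $\GL(2,\R)$-conjugate to $S^{(2)}$ (as in the proof of Corollary~\ref{C:CritZariski}). Writing the invariant quaternion algebra of $\Delta$ as $A=B\otimes_k K$ with $B$ the invariant quaternion algebra of $S$, the fact that the unramified real places $\phi_1,\dots,\phi_r$ of $A$ are precisely the places of $K$ lying over the single place $\mathrm{id}_k$ forces $B$ to be ramified at every real place of $k$ except $\mathrm{id}_k$; that is exactly the arithmeticity criterion, so $S=p_1(\Gamma)$ lies in an arithmetic Fuchsian group. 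For the converse one reverses this chain: if $S$ is arithmetic then $B$ is ramified at all real places but one, the unramified places of $A$ all restrict to $\mathrm{id}_k$, the components $\phi_i(g)$ all share the trace of $g$, and $\Gamma$ becomes $\GL(2,\R)^r$-conjugate to a diagonal embedding of $S^{(2)}$; then $P_\Gamma$ is the single point $(1:\dots:1)$ and $F_\Gamma$ is a homeomorphic copy of the limit set of $S$ inside the diagonal circle, so $\Lim_\Gamma$ embeds in $S^1$.

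I expect the main obstacle to be this last, number-theoretic step: making rigorous the equivalence between the geometric degeneracy ``$P_\Gamma$ is a single point'' and the arithmeticity of $S$. Showing that proportional translation-length spectra force the embeddings $\phi_i$ to fix the trace field (rather than merely permute conjugate fields) is where the rigidity really lies, and bookkeeping the splitting behaviour of $A=B\otimes_k K$ at the real places — in particular checking that no ramified place of $A$ can lie over $\mathrm{id}_k$ — is the delicate part that ties the two sides together. The topological reduction and the application of Link's theorem are, by contrast, routine once the product structure is in hand.
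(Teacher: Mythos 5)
First, a point of comparison: this paper does not prove Theorem~\ref{T:MainCharactAlgR} at all --- it is quoted from \cite{sG10a}, where the hard direction is obtained by trace-theoretic arguments (Takeuchi-type criteria and growth estimates for traces), not by the Zariski-closure machinery of Section~2 of the present paper, which did not exist then. So your proposal is necessarily a different route, and much of it is sound. The reduction to $\Gamma=S^*$, the use of Lemma~\ref{L:Nonempty}, Theorem~\ref{T:LinkFP} and the convexity statement of Theorem~\ref{T:ConvexAlg} to show that an embedding $\Lim_\Gamma\hookrightarrow S^1$ forces $P_\Gamma$ to be a single point (uncountably many pairwise disjoint nondegenerate arcs $\{\xi\}\times I$ cannot sit in a circle) is correct and complete; the converse direction (all $\phi_i$, $i\le r$, fix the trace field, hence $\phi_i(S^{(2)})$ is conjugate to $S^{(2)}$ by Sampson's theorem \cite{jS76}, hence $\Lim_\Gamma$ lies on a ``conjugated diagonal'' circle with direction $(1:\ldots:1)$) is also right; and the bookkeeping $A\cong B\otimes_k K$, with ramification at $\phi_i$ governed by ramification at $\phi_i|_k$, is the correct translation into Takeuchi's boundedness condition. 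Incidentally, the point you single out as delicate --- that no ramified place of $A$ can lie over $\mathrm{id}_k$ --- is actually immediate: $B\otimes_{k,\mathrm{id}}\R$ contains the nonelementary group $S^{(2)}$, which has hyperbolic elements and hence unbounded traces, so this algebra is $M(2,\R)$ and not $\Ha$.

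The genuine gap is the single sentence ``proportionality of translation lengths rules out two independent blocks in the Zariski closure.'' This \emph{is} the rigidity content of the theorem, and your proposal supplies no argument for it; you only flag it as the expected obstacle. It does not follow formally from anything you quote: a priori a group could have all translation directions equal while its closure splits into two blocks. It can, however, be closed with tools already in this paper. First, since by Theorem~\ref{T:ConvexAlg} the limit cone of $\Gamma$ equals the closure of $P_\Gamma=\{p\}$ with $p\in\RP^{r-1}_+$, the group $\Gamma$ has no mixed element with a hyperbolic component: such an element would contribute a limit-cone point with a zero coordinate. Next, if the Zariski closure had $n\geq 2$ blocks as in Proposition~\ref{P:ZariskiClosure}, project $\Gamma$ to one coordinate in each of two distinct blocks; the image is a Zariski dense subgroup of $\PSL(2,\R)^2$ (not necessarily discrete, which is harmless), and by the previous remark every element of this image is either hyperbolic in both coordinates with translation direction the fixed point $(1:c)\in\RP^1$, or has both translation lengths zero. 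Its limit cone is therefore a single point of $\RP^1$, contradicting Benoist's theorem (\cite{yB97}, Section 1.2: the limit cone of a Zariski dense subgroup is convex with nonempty interior), which requires Zariski density only --- this is exactly the result the paper invokes in the proof of Theorem~\ref{T:BigLimitSet}. With this paragraph inserted, your chain ``$\Lim_\Gamma\hookrightarrow S^1$ $\Rightarrow$ $P_\Gamma$ a point $\Rightarrow$ $\overline{\Gamma}^Z=\Diag_r(\PSL(2,\R))$ $\Rightarrow$ $\phi_i|_k=\mathrm{id}$ for $i\le r$ $\Rightarrow$ $B$ ramified at all real places $\neq\mathrm{id}$ $\Rightarrow$ $p_1(\Gamma)$ inside an arithmetic Fuchsian group'' closes, and yields a proof genuinely different from (and arguably slicker than) the trace-estimate proof in \cite{sG10a}, at the cost of invoking Benoist.
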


In this section we provide examples of ``small" groups with a ``big" limit set, i.e. groups for which the projection of the regular limit set into the Furstenberg boundary is the whole Furstenberg boundary and even more for which the limit set is of nonempty interior.

Let $S$ be a semi-arithmetic subgroup of $\PSL(2,\R)$ that is a subgroup of some $\Gamma(A,\mathcal{O})$ as defined in \S \ref{S:DefArithm}. We set $\Gamma:=S^*$ a subgroup of $\PSL(2,\R)^r$. Then $S$ is said to have a \textit{modular embedding} if  for the natural embedding $f:S \rightarrow S^*=\Gamma$ there exists a holomorphic embedding $F:\bbH^2 \rightarrow (\bbH^2)^r$ with
$$ F(T z) =  f(T)F(z), \quad \text{for all} \quad T\in S \quad \text{and all} \quad z \in \bbH^2.$$
Examples of strictly semi-arithmetic groups admitting modular embeddings are Fuchsian triangle groups (see Cohen and Wolfart \cite{pC90}). 

According to Theorem 3 by Schmutz and Wolfart in \cite{pS00}, if $\phi_i$ is not a conjugation then $\phi_i(S)$ is not a Fuchsian group. Since every nondiscrete subgroup of $\PSL(2,\R)$ has elliptic elements of infinite order (see Theorem~8.4.1 in Beardon \cite{aB95}), the nondiscrete group $\phi_i(S)$ contains elliptic elements of infinite order. (Here the $\phi_i$ are the same as the ones from the definition of $S^*$.) 

Additionally, Corollary 5 by Schmutz and Wolfart \cite{pS00} states that for all $g=(g_1,\ldots,g_r)$ in $\Gamma$ such that $g_1$ is hyperbolic, we have the inequality $\tr(g_i)<\tr(g_1)$ for $i=2,\ldots,r$. Hence the Galois conjugates of the traces of hyperbolic elements are different, i.e. $|\phi_i(\tr(g_1))|\neq|\phi_j(\tr(g_1))|$ if $i\neq j$. Hence no trace in $\Tr(S)$ of a hyperbolic element is contained in a proper subfield of the trace field $\Q(\Tr(S))$ of $S$. In particular there are no hyperbolic elements in $S$ with integer traces.

\begin{lemma}
\label{L:MinPower}
Let $\Gamma$ be a subgroup of an irreducible arithmetic subgroup of $\PSL(2,\R)^r$ with $r\geq 2$ such that $p_j(\Gamma)$ is a semi-arithmetic Fuchsian group admitting a modular embedding and $r$ is the smallest power for which $p_j(\Gamma)$ has a modular embedding in an irreducible arithmetic subgroup of $\PSL(2,\R)^r$. Then for all the other $i = 1,\ldots, r$, $i \neq j$, the group $p_i(\Gamma)$ is not discrete and for every $g_j \in p_j(\Gamma)$ hyperbolic $\Q(\tr(g_j^2))=\Q(\Tr(p_j(\Gamma^{(2)})))$.
\end{lemma}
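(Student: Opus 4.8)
\emph{Plan.} After relabelling take $j=1$, and replace $\Gamma$ by the finite–index subgroup $\Gamma\cap\Gamma(A,\mathcal{O})^*$ (this changes neither the discreteness of the projections nor the invariant trace fields); thus I may assume $\Gamma\subseteq\Gamma(A,\mathcal{O})^*$, so that $S:=p_1(\Gamma)\subseteq\Gamma(A,\mathcal{O})$, that $p_i(\Gamma)=\phi_i(S)$ for every $i$, and that $\tr(\phi_i(g))=|\phi_i(\tr g)|$ for $g\in S$. Let $K$ be the totally real field of definition of $A$ and put $F:=\Q(\Tr(p_1(\Gamma^{(2)})))=\Q(\Tr S^{(2)})$, the invariant trace field of $S$; recall $F\subseteq\Q(\Tr S)\subseteq K$ and that the unramified embeddings $\phi_1=\mathrm{id},\dots,\phi_r$ are exactly the places of $K$ lying over the split real places of the invariant quaternion algebra $B$ over $F$, where $A\cong B\otimes_F K$.

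For the second assertion I would argue directly. Fix a hyperbolic $g\in S$. Since $g^2\in S^{(2)}$ we have $\tr(g^2)=(\tr g)^2-2\in\Tr S^{(2)}$, whence $\Q(\tr(g^2))\subseteq F$; only the reverse inclusion is at stake. Suppose $\Q(\tr(g^2))\subsetneq F$. Then there is an embedding $\sigma\colon F\hookrightarrow\R$ with $\sigma\neq\mathrm{id}_F$ fixing $\tr(g^2)$; extend $\sigma$ to one of the real embeddings $\phi_i$ of $K$, necessarily with $i\neq1$. From $\phi_i(\tr(g^2))=\tr(g^2)$ I obtain $\phi_i(\tr g)^2=(\tr g)^2$, i.e.\ $|\phi_i(\tr g)|=\tr g$. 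This is impossible: for $2\le i\le r$ Corollary~5 of Schmutz and Wolfart gives $|\phi_i(\tr g)|=\tr(\phi_i(g))<\tr g$, while for $i>r$ the component $\phi_i(g)$ lies in $\Ha$, so $|\phi_i(\tr g)|\le2<\tr g$. Hence $\Q(\tr(g^2))=F$, which is the claim. (Note that the embedding $\phi_i$ produced here satisfies $\phi_i|_F=\sigma\neq\mathrm{id}$, so $\phi_i(S)$ is not a conjugate of $S$ and the corresponding component of the modular embedding is strictly contracting — this is precisely what makes Corollary~5 strict for that $i$.)

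For the first assertion I must show $\phi_i(S)=p_i(\Gamma)$ is nondiscrete for each $i\neq1$. By the theorem of Schmutz and Wolfart it suffices that $\phi_i$ be \emph{not} a conjugation; and since a subgroup containing a finite–index discrete subgroup would itself be discrete, this reduces to proving $\phi_i|_F\neq\mathrm{id}_F$ for all $i\in\{2,\dots,r\}$, i.e.\ that $F=K$. This is where minimality enters. If $F\subsetneq K$, then because $B$ splits at $\mathrm{id}_F$ (as $A$ splits at $\phi_1$) \emph{all} $[K:F]\ge2$ places of $K$ over $\mathrm{id}_F$ are unramified, so some $\phi_i$ with $i\in\{2,\dots,r\}$ restricts to $\mathrm{id}_F$; the group $\phi_i(S^{(2)})$ then has the same traces as $S^{(2)}$, hence is a conjugate of it, and the $i$th component of the modular embedding is an \emph{isometric}, redundant copy. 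Collapsing such redundant factors exhibits a modular embedding of $S$ into a product $\PSL(2,\R)^{s}$ with $s<r$, contradicting the minimality of $r$. Therefore $F=K$, every $\phi_i$ with $i\neq1$ is non–isometric, and $\phi_i(S)$ is nondiscrete (and, being nondiscrete, contains elliptic elements of infinite order by Beardon~8.4.1).

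The routine part is the Galois computation of the second paragraph, which needs only Corollary~5 and the bound in $\Ha$. \textbf{The main obstacle is the reduction in the third paragraph}: one must make precise that a redundant isometric factor genuinely lowers the dimension while keeping the target an \emph{irreducible arithmetic} group. The clean way is to descend to the arithmetic group over $\Q(\Tr S)$ — where $S$ actually lives, so that the natural embedding is defined — grouping the split places by their restriction to $\Q(\Tr S)$; minimality then forces $K=\Q(\Tr S)$, and the strict–contraction argument of the second paragraph (applied to an embedding trivial on $F$ but not on $\Q(\Tr S)$) forces $\Q(\Tr S)=F$, giving $F=K$. The second delicate point is to fix, via Schmutz and Wolfart, the exact meaning of ``$\phi_i$ is a conjugation'' and its equivalence with $\phi_i|_F=\mathrm{id}_F$.
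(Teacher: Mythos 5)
Your proof takes essentially the same route as the paper: the paper gives no separate proof of this lemma, treating it as a direct consequence of the two Schmutz--Wolfart results quoted immediately before its statement --- Corollary 5 (the strict inequality $\tr(g_i)<\tr(g_1)$, supplemented by the bound $|\phi_i(\tr g)|\le 2$ at the ramified places) yielding that no trace of a hyperbolic element lies in a proper subfield of the trace field, and Theorem 3 ($\phi_i$ not a conjugation $\Rightarrow$ $\phi_i(S)$ not Fuchsian) yielding non-discreteness --- and these are exactly the two pillars of your argument. The technical points you flag (making the minimality reduction precise by descending to the invariant quaternion algebra over the invariant trace field, and pinning down the meaning of ``conjugation'') are left equally implicit in the paper, so your write-up is if anything more detailed than the original.
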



\subsection{The Furstenberg limit set}

The main result of this section is Theorem~\ref{T:BigFurstenberg}.It proves in particular for the groups satisfying the conditions of Lemma~\ref{L:MinPower} that their Furstenberg limit set is the whole Furstenberg boundary. First we need to prove the following technical lemma.

\begin{lemma}
\label{L:OneElliptic}
Let $\Gamma$ be a nonelementary subgroup of an arithmetic group in $\PSL(2,\R)^r$ with $r\geq 2$ and let $g=(g_1,\ldots,g_r)\in\Gamma$ be a mixed isometry with $g_1,\ldots, g_{k-1}$ hyperbolic and $g_k,\ldots, g_r$ elliptic of infinite order where $2 \leq k \leq r-1$ and such that $\Q(\tr(g_1))=\Q(\tr(g_1^n))$ for every integer $n\neq 0$. If $\tr(g_k)\neq \tr(g_{k+1})$, then there exists a mixed isometry $\tilde{g}\in\Gamma$ such that $\tilde{g}_1,\ldots, \tilde{g}_k$ are hyperbolic and $\tilde{g}_{k+1}$ is elliptic of infinite order.
\end{lemma}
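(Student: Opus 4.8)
The plan is to turn the elliptic $k$-th component of $g$ into a hyperbolic one by composing $g$ with a suitable conjugate of a high power of itself, exploiting the classical fact that a product of two rotations about sufficiently separated centers is hyperbolic, while arranging that the $(k+1)$-st component remains a product of rotations about nearly coincident centers and so stays elliptic. First I would record where the distinguished (discrete) factor sits: since $p_j(\Gamma)$ is a Fuchsian group it is discrete and hence has no elliptic elements of infinite order, so $j\le k-1$; for every $i\ge k$ the projection $p_i(\Gamma)$ is nonelementary and non-discrete (this is exactly the content of Lemma~\ref{L:MinPower}), and a non-discrete nonelementary subgroup of $\PSL(2,\R)$ is dense. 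In particular $p_k(\Gamma)$ and $p_{k+1}(\Gamma)$ are each dense.

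The key density statement I would then establish is that $(p_k,p_{k+1})(\Gamma)$ is dense in $\PSL(2,\R)\times\PSL(2,\R)$. The hypothesis $\tr(g_k)\neq\tr(g_{k+1})$ gives $\tr(g_k^2)\neq\tr(g_{k+1}^2)$, so the embeddings attaching the $k$-th and $(k+1)$-st factors disagree on $\Tr(\Gamma^{(2)})$; by Proposition~\ref{P:ZariskiClosure} (equivalently Corollary~\ref{C:CritZariski}) these two factors then lie in distinct diagonal blocks of $\overline{\Gamma}^{Z}$, so the joint projection is Zariski dense in the product. To upgrade Zariski density to density in the real topology I would take the closure $H=\overline{(p_k,p_{k+1})(\Gamma)}$: it contains the positive-dimensional torus $\overline{\langle(g_k,g_{k+1})\rangle}$, both components being elliptic of infinite order, so $H$ is non-discrete; its identity component is normalized by the Zariski-dense image, hence is normal in $\PSL(2,\R)^2$, and since both factor projections are dense a short normal-subgroup argument forces $H=\PSL(2,\R)^2$.

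With density in hand, let $z_k,z_{k+1}$ be the fixed points of $g_k,g_{k+1}$. By density I may choose $c\in\Gamma$ with $\rho_k:=d(z_k,c_kz_k)$ as large as I like and $\rho_{k+1}:=d(z_{k+1},c_{k+1}z_{k+1})$ as small as I like, and set $\tilde g:=g^{\,n}\,(c\,g^{\,n}c^{-1})$. For $i\in\{1,\dots,k-1\}$ the $i$-th component is a product of two hyperbolic isometries of equal large translation length $n\,\ell(g_i)$ along the axes of $g_i$ and $c_ig_i c_i^{-1}$, hence hyperbolic for $n$ large provided these axes are distinct. The $k$-th and $(k+1)$-st components are products of two rotations, whose traces equal $2\big(\cos^2(n\alpha/2)-\sin^2(n\alpha/2)\cosh\rho_k\big)$ and $2\big(\cos^2(n\beta/2)-\sin^2(n\beta/2)\cosh\rho_{k+1}\big)$, where $\alpha,\beta$ are the rotation angles of $g_k,g_{k+1}$. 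Since $\alpha,\beta$ are irrational multiples of $2\pi$, a coset/equidistribution argument on the closure of $\{(n\alpha,n\beta)\}$ in the $2$-torus yields arbitrarily large $n$ with $n\alpha$ bounded away from $2\pi\Z$ and $n\beta$ bounded away from $\pi\Z$; for such $n$, the large $\rho_k$ makes the $k$-th component hyperbolic and the small $\rho_{k+1}$ keeps the $(k+1)$-st component elliptic. Finally, as $\tilde g_1$ is hyperbolic, the type lemma in Section~\ref{S:DefArithm} guarantees that every elliptic component of $\tilde g$ has infinite order; thus $\tilde g_{k+1}$ is elliptic of infinite order, as required.

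The main obstacle is the bookkeeping in the factors $1,\dots,k-1$: the element $c$ is already constrained by the density requirements in the factors $k,k+1$, and one must simultaneously arrange that each $c_i$ with $i<k$ does not stabilize the axis of $g_i$, so that no cancellation $c_ig_i^{\,n}c_i^{-1}=g_i^{-n}$ destroys hyperbolicity there. This needs an independence between the merely discrete factor $j$ (for which the full joint projection is \emph{not} dense) and the factors $k,k+1$. I expect to obtain it from the block decomposition of $\overline{\Gamma}^{Z}$ together with the hypothesis $\Q(\tr(g_1))=\Q(\tr(g_1^{\,n}))$, which keeps the trace field, and hence the block containing the first factor, unchanged when $g$ is replaced by the power $g^{\,n}$ used in the construction. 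Making this coordination precise, rather than the geometric product-of-rotations computation, is the delicate point.
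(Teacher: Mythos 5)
Your overall architecture is different from the paper's and parts of it are sound: the joint topological density of $(p_k,p_{k+1})(\Gamma)$ in $\PSL(2,\R)\times\PSL(2,\R)$ can indeed be proved along the lines you sketch, and your closed-subgroup argument on the $2$-torus for selecting $n$ works without the paper's number-theoretic input. But the coordination problem you flag at the end is a genuine gap, not a bookkeeping detail, and your proposed fix does not touch it. Inside an irreducible arithmetic group the components of an element are Galois conjugates of one another (for $\Gamma\le S^*$ every component of $c$ is determined by $c_1$), so once you spend the density of $(p_k,p_{k+1})(\Gamma)$ to force $(c_k,c_{k+1})$ into $U_k\times U_{k+1}$, you have \emph{no} remaining freedom in the components $c_i$, $i<k$: it is a priori possible that every $c\in\Gamma$ with $(c_k,c_{k+1})\in U_k\times U_{k+1}$ has some $c_i$ in the bad set. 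Ruling this out needs a real argument (for instance: if it failed, then after pigeonholing, differences $cd^{-1}$ of such elements would all have $i$-th component fixing an endpoint of the axis of $g_i$, producing a subgroup of $\Gamma$ with solvable $i$-th projection but dense $(k,k+1)$-projection, contradicting injectivity of the projections on $\Gamma$); ``block decomposition plus invariance of the trace field under powers'' is not such an argument, since the issue is not that blocks move when $g$ is raised to a power.

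There are also two factual slips. First, your claim that the $i$-th component is ``hyperbolic for $n$ large provided these axes are distinct'' is false, so your bad set is too small: if $c_i$ carries one endpoint of the axis of $g_i$ to the other \emph{without} reversing the axis, say $c_i(A^-)=A^+$, then normalizing $g_i^n=\mathrm{diag}(\lambda,\lambda^{-1})$ one gets $c_ig_i^nc_i^{-1}=\begin{pmatrix}\lambda^{-1}&\ast\\0&\lambda\end{pmatrix}$ with the same $\lambda=e^{n\ell(g_i)/2}$, and the product $g_i^n\,c_ig_i^nc_i^{-1}$ is parabolic for \emph{every} $n$, not just in the cancellation case $c_ig_i^nc_i^{-1}=g_i^{-n}$. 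Second, $\tr(g_k)\neq\tr(g_{k+1})$ does not imply $\tr(g_k^2)\neq\tr(g_{k+1}^2)$ (take rotation angles with $\cos^2\theta_k+\cos^2\theta_{k+1}=1$); this one is harmless, since two factors lying in a common diagonal block of $\overline{\Gamma}^Z$ are related by a conjugation, which preserves $\tr$, so the hypothesis $\tr(g_k)\neq\tr(g_{k+1})$ already separates the blocks. For contrast, the paper sidesteps your coordination problem entirely: it takes $\tilde g=g^mh$ with $h\in\Gamma$ hyperbolic and, by Lemma~\ref{L:Schottky}, Schottky-coordinated with $g$ in the factors $i<k$ (so those components are automatically hyperbolic), and then spends both arithmetic hypotheses on the Claim that the rotation pair $(g_k,g_{k+1})$ has dense orbits on $(\partial\bbH^2)^2$ --- exactly the joint control of angles that your softer torus argument was designed to avoid.
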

\begin{proof}

Since $\Gamma$ is nonelementary, by Lemma~\ref{L:Nonempty} and by the fact that the fixed points of hyperbolic isometries are dense in $\Lim_\Gamma$, there is $h\in \Gamma$ that is a hyperbolic isometry.
By Lemma~\ref{L:Schottky}, we can assume without loss of generality that for all $i=1,\ldots,k-1$, $h_i$ and $g_i$ generate a Schottky group.

The idea is to find $m\in\N$ such that $g_k^mh_k$ is hyperbolic and $g_{k+1}^mh_{k+1}$ is elliptic (of infinite order).

By \S 7.34 in the book of Beardon~\cite{aB95}, the isometries $h_i$, $i=k,k+1$, can be represented as $h_i=\sigma_{i,2}\sigma_{i,1}$ where $\sigma_{i,j}$ are the reflections in geodesics $L_{i,j}$ that are orthogonal to the axis of $h_i$ and the distance between them equals $\ell(h_i)/2$. By \S 7.33 in \cite{aB95}, the elliptic isometries $g_i^m$ can be represented as $g_i^m=\sigma_{i,4}\sigma_{i,3}$ where $\sigma_{i,j}$ are the reflections in $L_{i,j}$ that pass through the fixed point of $g_i^m$ and the angle between them equals the half of the angle of rotation of $g_i$. We can choose $L_{i,2}$ and $L_{i,3}$ to be the same geodesic by choosing them to be the unique geodesic passing through the fixed point of $g_i^m$ that is orthogonal to the axis of $h_i$. Then $\sigma_{i,2}=\sigma_{i,3}$ and
$$ g_i^mh_i = (\sigma_{i,4}\sigma_{i,3})(\sigma_{i,2}\sigma_{i,1})=\sigma_{i,4}\sigma_{i,1}.$$
Hence if $L_{i,1}$ and $L_{i,4}$ do not intersect and do not have a common point at infinity, the isometry $g_i^mh_i$ is hyperbolic, and if they intersect, $g_i^mh_i$ is elliptic. By the next claim we can choose $m$ so that $g_k^m$ is a rotation of a very small angle such that $L_{k,4}$ does not intersect $L_{k,1}$ and $g_{k+1}^m$ is a rotation such that $L_{k+1,4}$ intersects $L_{k+1,1}$.

\textbf{Claim.} Every orbit of $(g_k,g_{k+1})$ on the 2-torus $(\partial\bbH^2)^2$ is dense.

Since $\tr(g_k)\neq \tr(g_{k+1})$ and $g_k$ and $g_{k+1}$ are elliptic of infinite order, they are rotations of angles $2\alpha\pi$ and $2\beta\pi$ respectively, i.e. $\tr(g_k)=|2\cos(\alpha \pi)|$ and $\tr(g_{k+1})=|2\cos(\beta \pi)|$, where $\alpha$ and $\beta$ are different irrational numbers in the interval $[0,\pi)$ and $\cos(\alpha\pi)$ is a Galois conjugate of $\cos(\beta\pi)=:\phi(\cos(\alpha\pi))$. The orbit of a point $(\xi_k,\xi_{k+1})$ in $(\partial\bbH^2)^2$ under $(g_k,g_{k+1})$ can be represented as the orbit of a point in the flat torus $S^1\times S^1$ under translations with translation vector $(2\alpha,2\beta)$. By Theorem 442 in the book of Hardy and Wright~\cite{gH62}, the orbit of a point is dense if $2\alpha$, $2\beta$ and $1$ are linearly independent over $\Q$. 

Let us assume that $2\alpha$, $2\beta$ and $1$ are linearly dependent over $\Q$, i.e. $2m\alpha + 2n\beta = q$ for some integers $q,m,n$. 

For a positive integer $l$, we have $\cos(lx)=P_l(\cos x)$ where $P_l$ is a polynomial of degree $l$. Hence 
\begin{eqnarray*}
\phi(\cos(l\alpha\pi)) &=&\phi(P_l(\cos(\alpha\pi)))=P_l(\phi(\cos(\alpha\pi)))\\
 &=&P_l(\cos(\beta\pi))=\cos(l\beta\pi).
\end{eqnarray*}

This means that a conjugate of $\cos(n\alpha\pi)$ is 
\begin{eqnarray*}
\phi(\cos(n\alpha\pi)) &=&\cos(n\beta\pi)=\cos((q-m\alpha)\pi)\\
&=&\cos(q\pi)\cos(m\alpha\pi)+\sin(q\pi)\sin(m\alpha\pi).
\end{eqnarray*}

Up to multiplying by two the equality $2m\alpha + 2n\beta = q$, one can assume that $q$ is even and hence $\sin(q\pi)=0$ and $\cos(q\pi)=1$. Thus $\phi(\cos(n\alpha\pi)) = \cos(m\alpha\pi)$. 

If $m=n$, then $\phi(\cos(n\alpha\pi)) = \cos(n\alpha\pi)$. Hence $\phi_k(g_1^n)=2\cos(n\alpha\pi)=\phi_{k+1}(g_1^n)$, which is impossible because $\phi_k(g_1)=2\cos(\alpha\pi)\neq\phi_{k+1}(g_1)$ and  $\Q(\tr(g_1))=\Q(\tr(g_1^n))$. Therefore $m\neq n$.

We will show that $\phi(\cos(n\alpha\pi)) = \cos(m\alpha\pi)$ is impossible by finding $c$ different conjugates of $\cos(n^c\alpha\pi)$ where $c$ is an arbitrary integer.
\begin{eqnarray*} \phi(\cos(n^c\alpha\pi))& = & \phi(P_{n^{c-1}}(\cos(n\alpha\pi))) = P_{n^{c-1}}(\phi(\cos(n\alpha\pi)))\\
&=& P_{n^{c-1}}( \cos(m\alpha\pi)) = \cos(mn^{c-1}\alpha\pi).
\end{eqnarray*}
By induction we see that for every integer $d$ with $0<d\leq c$,
$$ \phi^d(\cos(n^c\alpha\pi)) =\cos(m^dn^{c-d}\alpha\pi).$$
All $\phi^d(\cos(n^d\alpha\pi))$ are conjugate and different because $\cos(m^dn^{c-d}\alpha\pi)$ are different for different $d$. This contradicts the fact that $\Q(\Tr(\Gamma))$ is a finite extension of $\Q$.

Thus $2\alpha$, $2\beta$ and $1$ are linearly independent over $\Q$. Therefore the orbit of $(\xi_k,\xi_k)$ under $(g_k,g_{k+1})$ is dense in $\partial\bbH^2\times\partial\bbH^2$.
\end{proof}


\begin{theorem}
\label{T:BigFurstenberg}
Let $\Gamma$ be a subgroup of an irreducible arithmetic group $\Delta$ in $\PSL(2,\R)^r$ with $r \geq 2$ such that, for one $j\in \{1,\ldots,r\}$, $p_j(\Gamma)$ is a cofinite Fuchsian group and for all the others $i = 1,\ldots, r$, $i \neq j$, the group $p_i(\Gamma)$ is not discrete and such that,  for every $g_j \in p_j(\Gamma)$ hyperbolic, $\Q(\tr(g_j^2))=\Q(\Tr(p_j(\Gamma^{(2)})))$.

Then $F_\Gamma$ is the whole Furstenberg boundary $(\partial \bbH^2)^r$.
\end{theorem}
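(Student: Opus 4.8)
The plan is to combine Link's structure theorem with a ``collapse and rotate'' dynamical argument on the Furstenberg boundary. After relabeling the factors assume $j=1$. First I would check that $\Gamma$ falls under Theorem~\ref{T:LinkFP}: since $\Gamma$ lies in an arithmetic group, every mixed isometry has only hyperbolic and elliptic-of-infinite-order components, and $p_1(\Gamma)$ is nonelementary (being cofinite), so Lemma~\ref{L:Nonempty} gives that $\Gamma$ is nonelementary with $\Lim_\Gamma^{reg}\neq\emptyset$. Theorem~\ref{T:LinkFP} then applies: $F_\Gamma$ is a nonempty, closed, minimal $\Gamma$-invariant subset of $(\partial\bbH^2)^r$. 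Because $F_\Gamma$ is closed it suffices to prove it is dense, and because it is $\Gamma$-invariant it suffices to exhibit a single point of $F_\Gamma$ whose orbit closure is everything. I also record that the projection of $F_\Gamma$ to the first factor is a nonempty closed $p_1(\Gamma)$-invariant set, so by minimality of the action of the cofinite group $p_1(\Gamma)$ on its limit set $\partial\bbH^2$ it equals all of $\partial\bbH^2$.

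The key observation is that on $(\partial\bbH^2)^r\cong(S^1)^r$ the group $\Gamma$ acts coordinatewise, and an elliptic element of infinite order acts on its circle $\partial\bbH^2$ as an irrational rotation. So I would look for an element $a=(a_1,\dots,a_r)\in\Gamma$ with $a_1$ hyperbolic (fixed points $\zeta^+,\zeta^-$) and $a_2,\dots,a_r$ elliptic of infinite order. Fix any $\eta=(\eta_1,\dots,\eta_r)\in F_\Gamma$ with $\eta_1\neq\zeta^-$. As $n\to+\infty$ the coordinate $a_1^n\eta_1$ converges to $\zeta^+$, while each $a_i^n\eta_i$ ($i\ge2$) runs through the orbit of $\eta_i$ under an irrational rotation. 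Provided the rotation numbers of $a_2,\dots,a_r$ together with $1$ are linearly independent over $\Q$, this orbit is dense in $(S^1)^{r-1}$ by Theorem~442 in Hardy and Wright~\cite{gH62}, exactly as in the Claim inside Lemma~\ref{L:OneElliptic}. Hence the accumulation set of $\{a^n\eta\}$ is the whole fibre $\{\zeta^+\}\times(\partial\bbH^2)^{r-1}$, which lies in $F_\Gamma$ as $F_\Gamma$ is closed and $\Gamma$-invariant.

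To finish I would spread this fibre over the first factor. For any $\gamma\in\Gamma$ the maps $\gamma_2,\dots,\gamma_r$ are homeomorphisms of $\partial\bbH^2$, so $\gamma\cdot\big(\{\zeta^+\}\times(\partial\bbH^2)^{r-1}\big)=\{\gamma_1\zeta^+\}\times(\partial\bbH^2)^{r-1}$, whence $\big(p_1(\Gamma)\zeta^+\big)\times(\partial\bbH^2)^{r-1}\subseteq F_\Gamma$. Since $\overline{p_1(\Gamma)\zeta^+}=\partial\bbH^2$ by the minimality noted above, taking closures gives $F_\Gamma=(\partial\bbH^2)^r$. The $\Q$-linear independence of the rotation numbers is precisely where the trace-field hypothesis enters: a rational relation among them would produce, exactly as in the Claim of Lemma~\ref{L:OneElliptic}, infinitely many distinct Galois conjugates of some $\cos(m\alpha\pi)$, contradicting that $\Q(\Tr(\Gamma))$ is a finite extension of $\Q$; the assumption $\Q(\tr(g_1^2))=\Q(\Tr(p_1(\Gamma^{(2)})))$ guarantees that the relevant conjugate traces are pairwise distinct and that none lies in a proper subfield.

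I expect the main obstacle to be producing the element $a$ itself, namely one that is hyperbolic in the first coordinate and elliptic of infinite order in all the others simultaneously. Non-discreteness of each $p_i(\Gamma)$ supplies elliptic elements of infinite order in each factor separately (by Theorem~8.4.1 in Beardon~\cite{aB95}), but realizing them in a single element is delicate. Here I would use the reflection-composition technique of Lemma~\ref{L:OneElliptic}: starting from a hyperbolic element and composing it with high powers of elements having elliptic components, the density Claim allows one to tune each secondary coordinate into the elliptic regime at once. Should some secondary coordinate resist being made elliptic, the same collapse argument fills only the fibre over the attracting fixed points of the remaining hyperbolic coordinates, reducing the statement to the Furstenberg limit set of the projection to those coordinates; I would then argue by induction on the number of hyperbolic coordinates, the base case being the one-hyperbolic-coordinate situation treated above.
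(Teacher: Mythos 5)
Your skeleton (Link minimality, collapse along a mixed element, then spread over the first factor using cofiniteness of $p_1(\Gamma)$) is exactly the paper's, and for $r=2$ your argument coincides with the paper's base case. But for $r\geq 3$ your main route rests on an element $a\in\Gamma$ with $a_1$ hyperbolic and $a_2,\ldots,a_r$ \emph{all} elliptic of infinite order, and this is a genuine gap: nothing in the hypotheses, and nothing in the paper, produces such an element. Non-discreteness of each $p_i(\Gamma)$ gives, for each $i$ separately, some element whose $i$-th coordinate is elliptic of infinite order, but there is no reason a single element should be elliptic in every secondary coordinate simultaneously. Moreover Lemma~\ref{L:OneElliptic} goes in the \emph{opposite} direction: it converts an elliptic coordinate into a hyperbolic one (strictly increasing the number of hyperbolic components), and its reflection trick $g_i^mh_i$ can only render a coordinate elliptic when the rotating element $g_i$ is \emph{already} elliptic in that coordinate, so it cannot ``tune each secondary coordinate into the elliptic regime'' starting from the elements you actually have. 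The paper's proof is designed around precisely this obstruction: it inducts on the number of factors $m$, and at stage $m$ it only needs an element with $g_1,\ldots,g_{m-1}$ hyperbolic and the single new coordinate $g_m$ elliptic of infinite order --- which is exactly what repeated application of Lemma~\ref{L:OneElliptic} provides.

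There is a second gap: even granted your element $a$, you invoke density of the $(a_2,\ldots,a_r)$-orbit in $(S^1)^{r-1}$ via Hardy--Wright, asserting that the required $\Q$-linear independence of $1,2\alpha_2,\ldots,2\alpha_r$ follows ``exactly as in the Claim'' of Lemma~\ref{L:OneElliptic}. That Claim treats only \emph{two} rotation angles: a two-term relation $2m\alpha+2n\beta=q$ turns, after applying the Galois embedding, into an equality of two single cosines, $\phi(\cos(n\alpha\pi))=\cos(m\alpha\pi)$, and this is what makes the iteration $\phi^d(\cos(n^c\alpha\pi))=\cos(m^dn^{c-d}\alpha\pi)$ --- hence infinitely many distinct conjugates --- possible. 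A relation with three or more terms does not reduce to an equality of two cosines (product-to-sum expansion produces sums of products), so the argument does not extend verbatim, and this independence statement would need its own proof. Note that the paper never needs it: in the theorem's induction only one elliptic coordinate is ever iterated, so density of a single irrational rotation orbit on $S^1$ suffices, and the two-rotation case occurs only inside the proof of Lemma~\ref{L:OneElliptic}. Your fallback (``induct on the number of hyperbolic coordinates'') has the right shape --- it is essentially the paper's induction read backwards --- but it inherits both gaps unless you first use Lemma~\ref{L:OneElliptic} to reduce to elements with exactly one elliptic coordinate, at which point you have reproduced the paper's proof.
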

\begin{proof} Without loss of generality we can assume that $j=1$.

We denote $\Gamma_i :=p_i(\Gamma)$, for $i=1,\ldots,r$.
 
First we remark that by Lemma~\ref{L:Nonempty} the regular limit set of $\Gamma$ is not empty and hence there is $\xi = (\xi_1,\ldots,\xi_r) \in F_\Gamma$. Without loss of generality, we can assume that $\xi$ is the projection in the Furstenberg boundary of the attractive fixed point of a hyperbolic isometry in $\Gamma$. By Theorem~\ref{T:LinkFP}, $F_\Gamma$ is the minimal closed $\Gamma$-invariant subset of the Furstenberg boundary $(\partial \bbH^2)^r$. Therefore $F_\Gamma = \overline{\Gamma(\xi)}$. The idea is to show that $\overline{\Gamma(\xi)}$ is the whole Furstenberg boundary $(\partial \bbH^2)^r$ and hence $F_\Gamma = (\partial \bbH^2)^r$.

We will mainly use the fact that except for $\Gamma_1$ all the other $\Gamma_i$ are non discrete and in particular that then they contain elliptic elements of infinite order. An elliptic element $e$ of infinite order acts on $\partial\bbH^2$ as a "rotation" of irrational angle. That is why the orbit of a point in $\partial\bbH^2$ under the action of $e$ is dense in $\partial\bbH^2$.

We further note that the restriction of every $\phi_i$, $i=2,\ldots,r$, to $\Tr(\Gamma_1^{(2)})$ is not the identity. Therefore, since  $\Q(\tr(g_1^2))=\Q(\Tr(\Gamma_1^{(2)}))$  for every $g_1 \in \Gamma_1$ hyperbolic, the Galois conjugates of every hyperbolic element $g_1 \in\Gamma_1$ are different.

We will show by induction that $(\partial \bbH^2)^r = \overline{\Gamma(\xi)}$. By abuse of notation, when it is clear from the context, we will note the projection of $\Gamma$ to the first $m$ factors also with $\Gamma$.

\medskip
\textbf{Case $m=2$:} Let $g = (g_1,g_2)\in \Gamma$ be a transformation such that $g_2$ is an elliptic transformation of infinite order. Such $g_2$ exists because $\Gamma_2$ is not discrete. The isometry $g_1$ is hyperbolic because $\Gamma_1$ is discrete. 

Let $\eta_1$ be the attractive fixed point of $g_1$. First we note that $\{\eta_1\}\times\partial\bbH^2$ is in $\overline{\Gamma(\xi)}$. The reason is that for any point $\eta_2 \in \bbH^2$ there is a sequence $\{n_k\}$ of powers such that $g_2^{n_k}(\xi_2)\longrightarrow \eta_2$ when $n_k\longrightarrow \infty$ and additionally, because of the dynamics of the hyperbolic isometries, $g_1^{n_k}(\xi_1)\longrightarrow \eta_1$ when $n_k\longrightarrow \infty$. The only problem could be that $\xi_1$ is the repelling fixed point of $g_1$. In this case we consider $g^{-1}$ instead of $g$. 

Now let $\zeta = (\zeta_1,\zeta_2)$ be a point in $\partial\bbH^2\times\partial\bbH^2$. Since $\Gamma_1$ is a cofinite Fuchsian group, $\Lim_{\Gamma_1}$ is the whole boundary of $\bbH^2$. It is also the minimal closed $\Gamma_1$-invariant subset of $\partial\bbH^2$(see Theorem~5.3.7 in \cite{aB95}). Therefore $\Lim_{\Gamma_1}=\overline{\Gamma_1(\eta_1)}$ and there is a sequence of elements $\{h_n\}$ in $\Gamma$ such that $(h_n)_1(\eta_1)\stackrel{n\rightarrow \infty}\longrightarrow \zeta_1$.

The points $(\eta_1,(h_n)_2^{-1}(\zeta_2))$ are points in $\overline{\Gamma(\xi)}$ because it contains $\{\eta_1\}\times\partial\bbH^2$. Therefore all the points $((h_n)_1(\eta_1), \zeta_2)$ are in $\overline{\Gamma(\xi)}$ and hence their limit $\zeta=\lim_{n\rightarrow \infty}((h_n)_1(\eta_1), \zeta_2)$ is also in $\overline{\Gamma(\xi)}$. Thus $\overline{\Gamma(\xi)}=(\partial \bbH^2)^2$.

\medskip
\textbf{Case $m \geq 3$:} Let us assume that $(\partial \bbH^2)^{m-1} = \overline{\Gamma((\xi_1,\ldots,\xi_{m-1}))}$. The aim is to prove that $(\partial \bbH^2)^{m} = \overline{\Gamma((\xi_1\ldots,\xi_{m}))}$.

First we show that there exists $g=(g_1,\ldots,g_m)\in \Gamma$ such that $g_1,\ldots,g_{m-1}$ are hyperbolic and $g_m$ is elliptic of infinite order. Indeed, since $\Gamma_m$ is not discrete, there is $g'=(g'_1,\ldots,g'_m )\in \Gamma$ such that $g'_m$ is elliptic of infinite order. If all the other $g'_i$ are hyperbolic, then $g=g'$. Otherwise, since $\Q(\tr({g'}_1^2))=\Q(\tr({g'}_1^{2n}))$ for every integer $n\neq 0$ and also all the Galois conjugates of the hyperbolic element ${g'}_1^2$ are different, we can apply Lemma~\ref{L:OneElliptic} to ${g'}_1^2$. In this way we find $\tilde{g'}\in \Gamma$ such that $\tilde{g'}_m$ is elliptic and among the other $\tilde{g'}_i$ there are strictly more hyperbolic elements than in the initial isometry. Thus, after possibly using Lemma~\ref{L:OneElliptic} several times, we can find $g=(g_1,\ldots,g_m)\in \Gamma$ such that $g_1,\ldots,g_{m-1}$ are hyperbolic and $g_m$ is elliptic of infinite order.

Let $\eta_i$ denote the attractive fixed point of $g_i$ for $i=1,\ldots,m$. We can assume that $\xi_i$ is none of the fixed points of $g_i$. The reason is that $\xi$ is the projection on the Furstenberg boundary of the attractive fixed point of an element $h$ in $\Gamma$. By Lemma~\ref{L:Schottky}, we can find $h'$ such that $h'_i$ and $g_i$ do not have any common fixed point for $i=1,\ldots,m-1$. Then instead of the attractive fixed point of $h$ we can take the attractive fixed point of $h'$. 

As in the previous case we see that $\{\eta_1\}\times\ldots\times\{\eta_{n-1}\}\times\partial\bbH^2$ is a subset of $\overline{\Gamma(\xi)}$ and that for any point $(\zeta_1,\ldots,\zeta_m)$ in the closure of the $\Gamma$-orbit of $\{\eta_1\}\times\ldots\times\{\eta_{m-1}\}\times\partial\bbH^2$, the points in $\{\zeta_1\}\times\ldots\times\{\zeta_{m-1}\}\times\partial\bbH^2$ are also in the closure of the orbit. Hence by the induction hypothesis, $(\partial \bbH^2)^{m} = \overline{\Gamma((\xi_1\ldots,\xi_{m}))}$.
\end{proof}
\begin{corollary} Let $\Gamma$ be as in Theorem~\ref{T:BigFurstenberg}. Then $\Lim_\Gamma$ is connected. 
\end{corollary}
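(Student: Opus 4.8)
The plan is to deduce connectedness from the product structure of the regular limit set together with the fullness of $F_\Gamma$ just established. By Theorem~\ref{T:LinkFP} we have $\mathcal{L}_\Gamma^{reg}=F_\Gamma\times P_\Gamma$. By Theorem~\ref{T:BigFurstenberg} the first factor is $F_\Gamma=(\partial\bbH^2)^r\cong(S^1)^r$, which is connected, and by Theorem~\ref{T:ConvexAlg} the second factor $P_\Gamma$ is convex and hence connected. A product of connected spaces is connected, so $\mathcal{L}_\Gamma^{reg}$ is connected, and therefore so is its closure $\overline{\mathcal{L}_\Gamma^{reg}}$ in $\partial((\bbH^2)^r)$.

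It then remains to show that this closure is all of $\mathcal{L}_\Gamma$. One inclusion is free: $\mathcal{L}_\Gamma$ is closed and contains $\mathcal{L}_\Gamma^{reg}$, so $\overline{\mathcal{L}_\Gamma^{reg}}\subseteq\mathcal{L}_\Gamma$. The content is the reverse inclusion, namely that every point of the singular limit set $\mathcal{L}_\Gamma^{sing}$ lies in $\overline{\mathcal{L}_\Gamma^{reg}}$; this is the step I expect to be the main obstacle. The quickest route is to invoke the density, already used in the proof of Lemma~\ref{L:OneElliptic}, of the fixed points of hyperbolic isometries in $\mathcal{L}_\Gamma$: a hyperbolic isometry has all components hyperbolic, so its fixed geodesic is regular and its two fixed points lie in $\mathcal{L}_\Gamma^{reg}$. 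Density of these points in $\mathcal{L}_\Gamma$ then forces $\mathcal{L}_\Gamma=\overline{\mathcal{L}_\Gamma^{reg}}$, which is connected as the closure of a connected set.

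If instead one wishes to establish the inclusion $\mathcal{L}_\Gamma^{sing}\subseteq\overline{\mathcal{L}_\Gamma^{reg}}$ directly, I would take a singular limit point $\xi$, some of whose direction-coordinates in $\RP^{r-1}$ vanish, and approximate it by regular points as follows. Its direction lies in the limit cone, which by Theorem~\ref{T:ConvexAlg} equals the closure of $P_\Gamma$ in $\RP^{r-1}$; hence it is a limit of directions lying in the open cone $\RP^{r-1}_+$ that occur in $P_\Gamma$. For each such approximating direction the product structure $\mathcal{L}_\Gamma^{reg}=F_\Gamma\times P_\Gamma$ together with $F_\Gamma=(\partial\bbH^2)^r$ lets me prescribe the Furstenberg coordinates freely: I match those coordinates of $\xi$ whose direction-weight stays positive and choose the remaining ones arbitrarily. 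As the approximating direction tends to that of $\xi$, the coordinates whose weight tends to $0$ drop out of the boundary point in the cone topology, so the chosen regular limit points converge to $\xi$. This yields $\mathcal{L}_\Gamma^{sing}\subseteq\overline{\mathcal{L}_\Gamma^{reg}}$ and hence again the connectedness of $\mathcal{L}_\Gamma$.
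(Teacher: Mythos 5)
Your proposal is correct and takes essentially the same route as the paper: the paper also observes that $\mathcal{L}_\Gamma$ is the closure of the (regular) fixed points of the hyperbolic isometries in $\Gamma$, hence equals $\overline{\mathcal{L}_\Gamma^{reg}}$, and then concludes from the product structure $\mathcal{L}_\Gamma^{reg}=F_\Gamma\times P_\Gamma$ with $F_\Gamma$ a torus and $P_\Gamma$ convex (Theorem~\ref{T:ConvexAlg}). Your second, more hands-on approximation argument is superfluous given the first one, but the first one alone is exactly the paper's proof.
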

\begin{proof}
Since $\Lim_\Gamma$ is the closure of the attractive fixed points of the hyperbolic isometries in $\Gamma$, it is also the closure of $\Lim_\Gamma^{reg}$. Hence if $\Lim_\Gamma^{reg}$ is connected, then $\Lim_\Gamma$ is connected too. $\Lim_\Gamma^{reg} = F_\Gamma \times P_\Gamma$ is connected because $F_\Gamma$ is a torus and $P_\Gamma$ is convex by Theorem~\ref{T:ConvexAlg}.
\end{proof}

\begin{corollary}
Let $\Delta$ be an irreducible arithmetic subgroup of $\PSL(2,\R)^r$ with $r\geq2$ and $\Gamma$ a subgroup of $\Delta$ such that $p_j(\Gamma)$ is a Fuchsian group admitting a modular embedding. Then $F_\Gamma$ is a $q$-dimensional torus, where $q$ is the smallest power for which $p_j(\Gamma)$ has a modular embedding in an irreducible arithmetic subgroup of $\PSL(2,\R)^q$.
\end{corollary}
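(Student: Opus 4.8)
The plan is to pass to the minimal modular embedding by collapsing the diagonal blocks of the Zariski closure, to compute the Furstenberg limit set there with Theorem~\ref{T:BigFurstenberg}, and to transport the answer back along the block conjugations. First I would note that $\Gamma$ is nonelementary: the cofinite Fuchsian group $p_j(\Gamma)$ is nonelementary, and since $\Gamma$ is contained, up to commensurability, in an arithmetic group all of whose mixed isometries have only hyperbolic or elliptic components, Lemma~\ref{L:Nonempty} yields that $\Gamma$ is nonelementary with $\mathcal{L}_\Gamma^{reg}\neq\emptyset$. Hence Theorem~\ref{T:LinkFP} applies and $F_\Gamma$ is the minimal closed $\Gamma$-invariant subset of $(\partial\bbH^2)^r$. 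By Proposition~\ref{P:ZariskiClosure} the Zariski closure of $\Gamma$ is $\prod_{b=1}^{q'}\Diag_{k_b}(\PSL(2,\R))$ with $k_1+\cdots+k_{q'}=r$; write $q'$ for the number of blocks.

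Each block being a conjugate of the diagonal, there are fixed elements of $\GL(2,\R)$ conjugating the components of every $g\in\Gamma$ inside a block to a single representative component; on the Furstenberg boundary this means the coordinates of a block are determined by one representative coordinate through fixed M\"obius maps. The fixed points of hyperbolic elements therefore lie in the twisted diagonal $T\cong(\partial\bbH^2)^{q'}$ cut out by these maps, and since $F_\Gamma$ is the closure of such points we get $F_\Gamma\subseteq T$. Choosing one coordinate per block (taking $j$ for its own block) defines a projection $\pi\colon\PSL(2,\R)^r\to\PSL(2,\R)^{q'}$ that restricts to an isomorphism of $\Gamma$ onto a Zariski dense group $\Gamma':=\pi(\Gamma)$ with $p_j(\Gamma')=p_j(\Gamma)$; as $\pi$ is equivariant and maps $T$ homeomorphically onto $(\partial\bbH^2)^{q'}$, it carries the minimal set $F_\Gamma$ homeomorphically onto the minimal set $F_{\Gamma'}$.

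Next I would identify $\Gamma'$ with the minimal modular embedding. Composing the holomorphic equivariant map of $p_j(\Gamma)$ with $\pi$ shows $\Gamma'$ is again a modular embedding of $p_j(\Gamma)$, now Zariski dense in an irreducible arithmetic group in $\PSL(2,\R)^{q'}$, so minimality gives $q'\geq q$; conversely the minimal embedding in $\PSL(2,\R)^q$ is itself Zariski dense by Corollary~\ref{C:CritZariski} and coincides with $\Gamma'$ under the canonical $*$-construction, whence $q'=q$. By the trace-field property recorded before Lemma~\ref{L:MinPower} every hyperbolic trace generates the full trace field, so $\Q(\tr(g_j^2))=\Q(\Tr(p_j(\Gamma^{(2)})))$, and by Schmutz--Wolfart the non-representative factors are non-discrete; thus $\Gamma'$ meets the hypotheses of Theorem~\ref{T:BigFurstenberg}, giving $F_{\Gamma'}=(\partial\bbH^2)^q$. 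Transporting back, $F_\Gamma$ is homeomorphic to $(\partial\bbH^2)^q$, a $q$-dimensional torus.

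The hard part will be the identification $q'=q$: one has to match the purely combinatorial block count of Proposition~\ref{P:ZariskiClosure} with the arithmetically defined minimal power. Following the computation inside Corollary~\ref{C:CritZariski}, two coordinates share a block exactly when the corresponding unramified embeddings agree on the invariant trace field $\Q(\Tr(p_j(\Gamma)^{(2)}))$, so $q'$ is the number of distinct restrictions of these embeddings to that field; the remaining point is to see that this number is exactly the minimal modular power $q$, i.e. that the unramified places of $\Delta$ realize precisely the distinct embeddings occurring in the minimal embedding. Once this is in place, all other steps are a routine transport of structure along the conjugations supplied by the block decomposition.
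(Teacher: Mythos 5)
Your geometric reduction is essentially the part of the argument that the paper delegates to the analogue of Lemma 5.16 in \cite{sG10a}, and it is sound: by Proposition~\ref{P:ZariskiClosure} the closure $\overline{\Gamma}^Z$ is a product of $q'$ twisted diagonal blocks, the Furstenberg limit set is the closure of the Furstenberg projections of attractive fixed points of hyperbolic elements and hence lies in the twisted diagonal $T\cong(\partial\bbH^2)^{q'}$, the coordinate projection $\pi$ is an isomorphism on $\overline{\Gamma}^Z$ (so $\Gamma'=\pi(\Gamma)$ is discrete, nonelementary and Zariski dense), and minimality from Theorem~\ref{T:LinkFP} transports $F_\Gamma$ homeomorphically onto $F_{\Gamma'}$. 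Your verification of the hypotheses of Theorem~\ref{T:BigFurstenberg} for $\Gamma'$ (non-discreteness of the non-distinguished factors via Schmutz--Wolfart, and the trace-field condition from the discussion before Lemma~\ref{L:MinPower}) is also in order.

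The genuine gap is exactly the step you yourself leave open at the end: the identification $q'=q$. This is not a routine ``transport of structure'' but the entire arithmetic content of the paper's proof, and the two shortcuts you offer for it do not work. For $q'\geq q$ you want $\Gamma'$ to be a modular embedding of $p_j(\Gamma)$ inside an irreducible arithmetic subgroup of $\PSL(2,\R)^{q'}$; but the ambient arithmetic group cannot be obtained as $\pi(\Delta)$, since irreducibility of $\Delta$ forces its projection to any proper sub-product to be non-discrete, and the hypothesis does not hand you a holomorphic $f$-equivariant map $\bbH^2\to(\bbH^2)^r$ for $\Gamma$ itself that you could compose with $\pi$ --- it only provides a modular embedding of $p_j(\Gamma)$ into some arithmetic group a priori unrelated to $\Gamma$ (and holomorphicity is needed, because $q$ is defined via modular, not merely arithmetic, embeddings). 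The paper instead argues arithmetically: with $S=p_j(\Gamma)$, $l=\Q(\Tr(S^{(2)}))$ and $\sigma_1,\ldots,\sigma_m$ the embeddings of $l$, one has (i) $q$ equals the number of $\sigma_a$ for which $\sigma_a(\Tr(S^{(2)}))$ is unbounded, i.e.\ the number of unramified real places of the invariant quaternion algebra of $S^{(2)}$, and (ii) the restrictions $\phi_b|_l$, $b=1,\ldots,r$, are precisely these $\sigma_a$: each $\phi_b|_l$ is unramified because $\phi_b$ is an unramified place of $\Delta$'s algebra, and conversely every unramified $\sigma_a$ extends to an embedding of the big trace field which must again be unramified, hence equals some $\phi_b$ with $b\leq r$. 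Combined with the fact (as in the proof of Corollary~\ref{C:CritZariski}, via Sampson) that two coordinates lie in the same Zariski block exactly when $\phi_b|_l=\phi_{b'}|_l$, this yields $q'=q$. Without (i) and (ii), your argument proves only that $F_\Gamma$ is a $q'$-dimensional torus for some undetermined $q'$, which is not the assertion of the corollary.
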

\begin{proof}
Without loss of generality we can assume that $j=1$. We denote $S:=p_1(\Gamma)$.

Let $l:= \Q(\Tr(S^{(2)}))$ and $\sigma_1,\ldots,\sigma_m$ its Galois isomorphisms. Since $q \leq m$ is the smallest power for which $S^{(2)}$ can be embedded in an irreducible arithmetic subgroup of $\PSL(2,\R)^q$, we have that $\sigma_i(\Tr(S^{(2)}))$ is not bounded for $i=1,\ldots, q$ and bounded for $i=q+1,\ldots, m$. Hence for every $\sigma_a$ with $a \in \{1,\ldots,q\}$ there is a $\phi_b$ with $b \in \{1,\ldots,r\}$ such that $\left.\phi_b\right|_l=\sigma_a$ and also for every $\phi_b$ with $b \in \{1,\ldots,r\}$ the restriction $\left.\phi_b\right|_l$ equals $\sigma_a$ for some $a \in \{1,\ldots,q\}$.

The rest of the proof is analogous to the proof of Lemma 5.16 in \cite{sG10a}, which proves the statement in the case when $p_j(\Gamma)$ is an arithmetic Fuchsian group.
\end{proof}

\subsection{The projective limit set}
Recall that by Corollary 5 by Schmutz and Wolfart \cite{pS00} for all $g=(g_1,\ldots,g_r)$ in $\Gamma$ such that $g_1$ is hyperbolic, we have the inequality $\tr(g_i)<\tr(g_1)$ for $i=2,\ldots,r$. Hence for every hyperbolic (or mixed) $g$ in $\Gamma$ the inequalities $\ell(g_i)<\ell(g_1)$ for $i=2,\ldots,r$ hold. Thus the translation direction $L(g)$ is in $\{(x_1:\ldots:x_r)\mid x_1 > x_i \text{ for all } i= 2,\ldots,r\}$.

Since $\Gamma$ is nonelementary, by Theorem~\ref{T:LinkFP} it follows that the translation directions of the hyperbolic isometries in $\Gamma$ are dense in $P_\Gamma$. So we have the following proposition.

\begin{proposition}
Let $\Gamma < \PSL(2,\R)^r$ be a modular embedding of a semi-arithmetic Fuchsian group $S$. Then 
$$P_\Gamma \subseteq \{(x_1:\ldots:x_r)\mid x_1 \geq x_i \text{ for all } i= 2,\ldots,r\}.$$
\end{proposition}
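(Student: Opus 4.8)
The plan is to assemble the statement directly from the two facts recalled immediately above, together with one elementary observation about the topology of cones in $\RP^{r-1}_+$; the substantive input has already been supplied by Corollary~5 of Schmutz and Wolfart and by Theorem~\ref{T:LinkFP}, so what remains is bookkeeping.

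First I would record that the set of translation directions of the hyperbolic isometries of $\Gamma$ lies in the \emph{open} cone
\[ C^\circ := \{(x_1:\ldots:x_r)\in\RP^{r-1}_+ \mid x_1 > x_i \text{ for all } i=2,\ldots,r\}. \]
This is the content of the preceding paragraph: for every $g=(g_1,\ldots,g_r)\in\Gamma$ with $g_1$ hyperbolic one has $\tr(g_i)<\tr(g_1)$ for $i\geq 2$, and since a hyperbolic isometry $h$ satisfies $\tr(h)=2\cosh(\ell(h)/2)$, the trace is a strictly increasing function of the translation length. Hence $\ell(g_i)<\ell(g_1)$ when $g_i$ is hyperbolic and $\ell(g_i)=0<\ell(g_1)$ when $g_i$ is elliptic; either way $L(g)=(\ell(g_1):\ldots:\ell(g_r))$ belongs to $C^\circ$.

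Next I would invoke Theorem~\ref{T:LinkFP}. Since $p_1(\Gamma)=S$ is cofinite, hence nonelementary, and every mixed element of $\Gamma=S^*$ has only hyperbolic or elliptic components, $\Gamma$ is nonelementary with $\Lim_\Gamma^{reg}\neq\emptyset$; the theorem then identifies $P_\Gamma$ with the closure in $\RP^{r-1}_+$ of the set of translation directions of the hyperbolic isometries of $\Gamma$.

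Finally I would close up. By the first step this set is contained in $C^\circ$, so its closure is contained in the closure of $C^\circ$, and the latter is contained in the \emph{closed} cone
\[ C := \{(x_1:\ldots:x_r)\in\RP^{r-1}_+ \mid x_1 \geq x_i \text{ for all } i=2,\ldots,r\}, \]
since each condition $x_1\geq x_i$ is closed in $\RP^{r-1}_+$. Therefore $P_\Gamma\subseteq C$, as claimed. I do not expect any real obstacle; the only point needing a moment's care is the passage from the strict inequalities cutting out $C^\circ$ to the non-strict ones cutting out $C$, which is exactly what taking the closure achieves.
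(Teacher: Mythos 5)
Your proof is correct and takes essentially the same route as the paper: Corollary~5 of Schmutz--Wolfart gives the strict trace, hence translation-length, inequalities placing the translation directions of hyperbolic elements in the open cone, and Theorem~\ref{T:LinkFP} identifies $P_\Gamma$ with the closure of that set of directions, which lies in the closed cone. The details you add (strict monotonicity of $\cosh$, the verification that $\Gamma$ is nonelementary so that Theorem~\ref{T:LinkFP} applies) are precisely the steps the paper leaves implicit.
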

\begin{rem}(i) In the above proposition, $r$ is not necessarily the smallest power for which $\Gamma$ has a modular embedding in an irreducible arithmetic subgroup of $\PSL(2,\R)^r$

(ii) There are examples of semi-arithmetic groups that do not admit modular embeddings and for which the proposition is not true. E.g. the strictly semi-arithmetic examples from Theorem 1 in \cite{pS00}.
\end{rem}

As this proposition already shows, $P_\Gamma$ is not $\RP^{r-1}_+$ and hence the limit set $\Lim_\Gamma$ is not the whole geometric boundary of $(\bbH^2)^r$. Nevertheless, we can still ask the question whether $P_\Gamma$ contains an open subset of $\RP^{r-1}_+$ or not.


The next theorem gives sufficient conditions for the limit set to be of nonempty interior.

\begin{theorem}
\label{T:BigLimitSet}
Let $\Gamma$ be a subgroup of an irreducible arithmetic group $\Delta$ in $\PSL(2,\R)^r$ with $r \geq 2$ such that, for one $j\in \{1,\ldots,r\}$, $p_j(\Gamma)$ is a cofinite Fuchsian group and for all the others $i = 1,\ldots, r$, $i \neq j$, the group $p_i(\Gamma)$ is not discrete and such that, for every $g_j \in p_j(\Gamma)$ hyperbolic, $\Q(\tr(g_j^2))=\Q(\Tr(p_j(\Gamma^{(2)})))$. Then $\Lim_\Gamma$ contains a subset homeomorphic to $D^{2r-1}$ (the $(2r-1)$-dimensional ball).
\end{theorem}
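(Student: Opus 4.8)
The plan is to exhibit the claimed $D^{2r-1}$ inside $\Lim_\Gamma$ by combining the two pieces of the product structure from Theorem~\ref{T:LinkFP}, namely $\Lim_\Gamma^{reg}=F_\Gamma\times P_\Gamma$. The hypotheses here are exactly those of Theorem~\ref{T:BigFurstenberg}, so I may immediately conclude that $F_\Gamma=(\partial\bbH^2)^r\cong(S^1)^r$, which is a compact manifold of dimension $r$. The remaining dimensions must come from the projective factor $P_\Gamma\subseteq\RP^{r-1}_+$: since $\RP^{r-1}_+$ is an open $(r-1)$-dimensional manifold, if I can show that $P_\Gamma$ contains an open subset of $\RP^{r-1}_+$, then $P_\Gamma$ will contribute an $(r-1)$-ball, and the product $(S^1)^r\times D^{r-1}$ will contain a subset homeomorphic to $D^{r}\times D^{r-1}=D^{2r-1}$. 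Thus the whole theorem reduces to the single claim that $P_\Gamma$ has nonempty interior in $\RP^{r-1}_+$.

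To produce an open set of translation directions, I would work with the mixed isometries supplied by Lemma~\ref{L:OneElliptic} and the elliptic-of-infinite-order elements present in every nondiscrete factor $\Gamma_i$, $i\neq j$ (take $j=1$). By Theorem~\ref{T:ConvexAlg} the projective limit set $P_\Gamma$ is convex, so it suffices to find $r$ translation directions in $P_\Gamma$ that are in general position, i.e.\ whose convex hull is full-dimensional; then convexity forces $P_\Gamma$ to contain the interior of that hull. The natural source of such directions is the following: starting from a hyperbolic $h=(h_1,\dots,h_r)\in\Gamma$ with translation direction $L(h)$, I multiply by suitable powers of an element $g$ whose components beyond the first include elliptic-of-infinite-order rotations, and I exploit the density statement (the Claim inside Lemma~\ref{L:OneElliptic}) together with the length-estimation techniques of \S7.33--7.34 of Beardon to vary the lengths $\ell(g_i^m h_i)$ in the subdominant coordinates $i=2,\dots,r$ while keeping the first coordinate dominant and hyperbolic. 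Because the rotation angles in distinct factors are rationally independent (this is precisely the linear-independence conclusion of the Claim, driven by $\Q(\tr(g_1^2))=\Q(\Tr(\Gamma_1^{(2)}))$ and the distinctness of Galois conjugates), the resulting translation directions fill out a genuinely $(r-1)$-dimensional spread rather than collapsing onto a lower-dimensional set.

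The hard part will be making the variation of translation directions quantitative enough to guarantee full dimension. Controlling $\ell(g_i^m h_i)$ as the rotation $g_i^m$ ranges over a dense set of angles requires a continuity/monotonicity estimate: as the reflection line $L_{i,4}$ rotates, the translation length of the composition $\sigma_{i,4}\sigma_{i,1}$ varies continuously and sweeps an interval of positive length, so by choosing $m$ along the dense orbit I can realize an open range of length ratios $\ell(g_i^m h_i)/\ell(g_1^m h_1)$ independently in each coordinate $i\geq 2$. The delicate point is simultaneity across all $r-1$ subdominant coordinates at once, since a single integer power $m$ controls all components of $g^m$ simultaneously; here the rational independence of the angles $(2\alpha_2,\dots,2\alpha_r,1)$ and the equidistribution theorem (Hardy--Wright, Theorem 442) are exactly what allow the joint orbit of $(g_2,\dots,g_r)$ on $(\partial\bbH^2)^{r-1}$ to be dense, so that a single sequence of powers $m$ realizes an open set of configurations. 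Once an open subset of $P_\Gamma$ is secured, I would assemble the homeomorphic copy of $D^{2r-1}$ by taking the product of $F_\Gamma=(S^1)^r$ with a closed $(r-1)$-ball inside this open subset of $P_\Gamma$, observe that this product lies in $\Lim_\Gamma^{reg}\subseteq\Lim_\Gamma$, and note that $(S^1)^r\times D^{r-1}$ contains an embedded $D^{2r-1}$.
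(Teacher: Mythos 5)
Your opening reduction is exactly the paper's: write $\Lim_\Gamma^{reg}=F_\Gamma\times P_\Gamma$ (Theorem~\ref{T:LinkFP}), get the torus factor from Theorem~\ref{T:BigFurstenberg}, and reduce everything to showing that $P_\Gamma$ has nonempty interior in $\RP^{r-1}_+$. But at that point the paper does something you never mention: it proves that $\Gamma$ is \emph{Zariski dense} in $\PSL(2,\R)^r$ (via the trace-field criterion, Corollary~\ref{C:CritZariski}, using that no Galois embedding $\phi_i$, $i\geq 2$, restricts to the identity on $\Tr(p_1(\Gamma^{(2)}))$ --- the unramified ones because some hyperbolic trace is sent to an elliptic trace, the ramified ones because their image lies in $[-2,2]$) and then invokes Benoist's theorem \cite{yB97} that the limit cone of a Zariski dense subgroup has nonempty interior. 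Your proposal replaces this by a bare-hands construction of translation directions, and that construction has a genuine gap.

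The gap is the claim that powers $g^m h$ realize ``an open range of length ratios $\ell(g_i^m h_i)/\ell(g_1^m h_1)$''. If $g_i$ is elliptic fixing $p_i$, then for every $m$ one has $\ell(g_i^m h_i)\leq d(p_i, g_i^m h_i p_i)=d(g_i^{-m}p_i,h_i p_i)=d(p_i,h_i p_i)$, so the subdominant lengths are \emph{bounded uniformly in $m$}, while $\ell(g_1^m h_1)$ grows without bound (linearly in $m$ in a Schottky configuration for $\langle g_1,h_1\rangle$). Realizing a dense set of rotation angles forces $m\to\infty$, so along any such family the ratios tend to $0$ and the directions $L(g^m h)$ collapse to a single point (essentially $L(g)$); the set of achieved ratios is countable with $0$ as its only accumulation point and contains no interval. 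One could still hope to extract $r$ affinely independent directions from this collapsing sequence and finish by convexity (Theorem~\ref{T:ConvexAlg}) --- this is the spirit of the paper's \emph{alternative} proof in Section~4 --- but that argument needs an interior point of the cone away from the collapsing vertex, which the paper obtains as $(1:\cdots:1)$ from Proposition~\ref{P:Parabolic} and which is available only when $\Gamma$ contains parabolic elements; moreover you would need (a) a single element of $\Gamma$ whose coordinates $2,\ldots,r$ are \emph{simultaneously} elliptic of infinite order, which nondiscreteness of the factors does not provide (and Lemma~\ref{L:OneElliptic} converts elliptic coordinates into hyperbolic ones, not the reverse), and (b) rational independence of \emph{all} $r-1$ rotation angles together with $1$, whereas the Claim inside Lemma~\ref{L:OneElliptic} establishes this only for pairs of factors with distinct traces. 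As written, the proposal does not prove that $P_\Gamma$ has nonempty interior, which is the whole content of the theorem beyond Theorem~\ref{T:BigFurstenberg}.
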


\begin{proof}
It is enough to show that the regular limit set $\Lim_\Gamma^{reg} = F_\Gamma \times P_\Gamma$ contains a subset homeomorphic to $D^{2r-1}$. By Theorem~\ref{T:BigFurstenberg}, the Furstenberg limit set equals the Furstenberg boundary and hence contains an $r$-dimensional ball. It remains to show that $P_\Gamma$ contains an $(r-1)$-dimensional ball. By Benoist's theorem in Section 1.2 in \cite{yB97}, for Zariski dense groups $\Gamma$, the projective limit set $P_\Gamma$ is of nonempty inerior. Hence it suffices to prove that $\Gamma$ is Zariski dense.

Without loss of generality we assume that $j=1$. For each $i = 2,\ldots, r$, there is a hyperbolic element $h$ in $p_1(\Gamma)$ such that $\phi_i(h)$ is elliptic of infinite order and thus $\tr(h) \neq \phi_i(\tr(h))$. Additionally, for $i=r+1,\ldots, n$, $\phi_i(\Tr(p_1(\Gamma^{(2)}))) \subseteq [-2,2]$.  Therefore, for $i=2,\ldots,n$, $\phi_i\left|_{\Tr(p_1(\Gamma^{(2)}))}\right. \neq id$ and so the fields generated by $\Tr(p_1(\Gamma^{(2)}))$ and $\Tr(p_1(\Delta^{(2)}))$ coinside. Hence by Corollary~\ref{C:CritZariski}, the group $\Gamma$ is Zariski dense.
\end{proof}

\section{Subgroups with parabolic elements}
The existance of parabolic elements in the group gives additional information on the projective limit set (see Proposition~\ref{P:Parabolic}). As a corollary, we propose an alternative proof in this case of Theorem~\ref{T:BigLimitSet} without using the result of Benoist \cite{yB97}.

\begin{proposition}
\label{P:Parabolic}
Let $\Delta$ be an irreducible arithmetic subgroup of $\PSL(2,\R)^r$ with $r\geq2$ and $\Gamma$ a nonelementary subgroup of $\Delta$. If $\Gamma$ contains a parabolic element, then $P_\Gamma$ contains the point $(1:\ldots:1)$.
\end{proposition}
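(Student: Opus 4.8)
The plan is to produce directly a single regular limit point of $\Gamma$ whose projective coordinate is $(1:\ldots:1)$, by examining the orbit of a base point under the powers of the parabolic element. Since $P_\Gamma$ is the projection of $\Lim_\Gamma^{reg}$ onto $\RP^{r-1}_+$, exhibiting one such point will suffice, and this avoids any appeal to the translation directions of honest hyperbolic isometries.

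First I would fix a parabolic element $g=(g_1,\ldots,g_r)\in\Gamma$, whose components $g_i$ are all parabolic (each fixing a unique point $\xi_i\in\partial\bbH^2$). Choosing a base point $x=(x_1,\ldots,x_r)$, I would consider the sequence $g^n(x)=(g_1^n x_1,\ldots,g_r^n x_r)$. As $g$ has infinite order and $\Gamma$ is discrete this sequence is discrete, and since $g_i^n x_i\to\xi_i$ in each factor it tends to the geometric boundary. The direction in $\RP^{r-1}_+$ of the geodesic ray from $x$ through $g^n(x)$ is $(d_H(x_1,g_1^n x_1):\ldots:d_H(x_r,g_r^n x_r))$.

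The key computation would be the asymptotics of $d_H(x_i,g_i^n x_i)$. Conjugating $g_i$ to the standard parabolic $z\mapsto z+t_i$ of the upper half-plane, which is an isometry and hence distance-preserving, one finds for any base point that $d_H(x_i,g_i^n x_i)=2\log n+c_i+o(1)$ as $n\to\infty$, where $c_i$ depends only on $g_i$ and $x_i$. The crucial feature is that the leading term $2\log n$ is the same in every factor, so the direction $(2\log n+c_1:\ldots:2\log n+c_r)$ converges to $(1:\ldots:1)$.

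Finally I would identify the limit of $g^n(x)$ in the boundary: in each factor the rays from $x_i$ through $g_i^n x_i$ converge to the ray from $x_i$ to $\xi_i$, so the Furstenberg coordinate of the limit is $(\xi_1,\ldots,\xi_r)$, while the projective coordinate converges to the interior direction $(1:\ldots:1)$. By the product description of the regular boundary, the sequence then converges in the cone topology to the regular boundary point $\xi=\bigl((\xi_1,\ldots,\xi_r),(1:\ldots:1)\bigr)$, so $\xi\in\Lim_\Gamma^{reg}$ and $(1:\ldots:1)\in P_\Gamma$. The step requiring the most care is precisely this last identification: one must check that both coordinates converge and that the limiting direction is interior, so that $\xi$ is genuinely regular rather than singular. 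The universal growth rate $2\log n$, independent of the individual parabolic and of the chosen base point, is exactly what forces the barycentric direction $(1:\ldots:1)$ and is the heart of the argument.
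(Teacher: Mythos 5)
Your argument is correct, and it takes a genuinely different route from the paper's. The paper never iterates the parabolic element itself: it first passes to a commensurable group $S^*$ with $S\subset\Gamma(A,\mathcal{O})$, then uses the cusp to build a Y-piece (Corollary 2.3 in \cite{sG08}) and Schmutz's trace formula (\cite{pS96}, Lemma 1) to manufacture hyperbolic elements $T_n\in S$ with $\tr(T_n)=n(\tr(T_u)+\tr(T_v))-\tr(T_u)$; since all Galois conjugates of these traces grow linearly in $n$, every component of $T_n^*$ is eventually hyperbolic with translation length $2\arcosh(\tr/2)=2\ln n+O(1)$, so the translation directions $L(T_n)$ converge to $(1:\ldots:1)$, and the proof closes by invoking Theorem~\ref{T:LinkFP}, which identifies $P_\Gamma$ with the closure of the set of translation directions of hyperbolic isometries. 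You bypass all of this by producing a regular limit point directly from the parabolic orbit: the horocyclic estimate $d_H(x_i,g_i^nx_i)=2\ln n+c_i+o(1)$ has the same leading term in every factor, so the rays from $x$ through $g^n(x)$ converge to the ray determined by $\bigl((\xi_1,\ldots,\xi_r),(1:\ldots:1)\bigr)$, which is regular because the limiting direction is interior, and hence $(1:\ldots:1)\in P_\Gamma$ by the very definition of $P_\Gamma$ as the projection of $\Lim_\Gamma^{reg}$; your flagged step (cone-topology convergence in the product and regularity of the limit) is indeed the one needing care, and your treatment of it is sound. Your route is more elementary (no commensurability reduction, no Y-pieces, no trace formula, no appeal to Theorem~\ref{T:LinkFP}) and strictly more general: it uses neither the arithmeticity of $\Delta$ nor the nonelementarity of $\Gamma$, and discreteness only through $d(x,g^n(x))\to\infty$, which the estimate itself supplies; it therefore proves the statement for any subgroup of $\PSL(2,\R)^r$ containing an element all of whose components are parabolic (which is what ``parabolic'' means in this paper). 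What the paper's longer route buys is information of independent use: it exhibits genuine hyperbolic elements of $S^*$ (elements of $S$ all of whose $\phi$-conjugates are hyperbolic) whose translation directions approach $(1:\ldots:1)$, exactly the kind of data that Theorem~\ref{T:LinkFP} and the convexity statement (Theorem~\ref{T:ConvexAlg}) are set up to exploit, for instance in the alternative proof of Theorem~\ref{T:BigLimitSet}. Note that both proofs ultimately rest on the same phenomenon: distances growing like $2\ln n$ with a universal coefficient independent of the factor.
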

\begin{proof}
Since $\Delta$ is arithmetic, it is commensurable with an arithmetic group derived from a quaternion algebra $\Gamma(A,\mathcal{O})^*$ where $\Gamma(A,\mathcal{O})$ is a subgroup of $\PSL(2,\R)$. Hence there is $k\in\N$ such that, for each $g = (g_1,\ldots,g_{r})$ in $\Gamma$, $g^k$ is in $\Gamma(A,\mathcal{O})^*$.

There is a subgroup $S$ of $\Gamma(A,\mathcal{O})$ such that $S^* = \Gamma \cap \Delta \cap \Gamma(A,\mathcal{O})^*$. The group $\Gamma$ is commensurable with the subgroup $S^*$ and hence $S$ is nonelementary. The group $S^*$ also contains a parabolic isometry.

We will show that $\mathcal{L}_{S^*}^{reg}$ is not empty and $P_{S^*}$ contains $(1:\ldots:1)$.

Let $T_u$ be a hyperbolic isometry in $S$. The connection between the translation length of $T_u$ and its trace is given by $\tr(T_u) = 2 \cosh (\ell(T_u)/2)$. The translation length of $T_u$ is equal to the length of the only simple closed geodesic in $\left\langle T_u\right\rangle\backslash \bbH^2$. For a Fuchsian group $\Gamma$ there is a bijection between its hyperbolic elements and the closed geodesics in $\Gamma\backslash \bbH^2$. Hence for the length of every closed geodesic in $\Gamma\backslash \bbH^2$ there is at least one hyperbolic transformation in $\Gamma$ with this translation length.

A \textit{Y-piece} is a surface of constant curvature -1 and of signature (0, 3), i.e. homeomorphic to a topological sphere with three points removed. By Leuzinger and the autor \cite{sG08}, Corollary 2.3, which is true not only for Fuchsian groups but also for nonelementary (non-discrete) subgroups of $\PSL(2,\R)$, an element $T_v \in S$ exists such that $\left\langle T_u,T_v\right\rangle\backslash \bbH^2$ contains a Y-piece with one cusp and two boundary geodesics of length $\ell(T_u)$ and $\ell(T_v)$. By Schmutz \cite{pS96}, Lemma 1, for all $n \in \N$, this Y-piece contains a closed geodesic such that the corresponding hyperbolic isometry $T_n$ satisfies
$$ \tr(T_n) = n (\tr(T_u) + \tr(T_v)) - \tr(T_u).$$
For $n$ big enough, all $\phi$-conjugates $\phi_i(T_n)$ of $T_n$ are hyperbolic because 
$$\tr(\phi_i(T_n)) = |\phi_i(\tr(T_n))| = |n (\phi_i(\tr(T_u)) + \phi_i(\tr(T_v))) - \phi_i(\tr(T_u))|$$
is unbounded when $n$ is unbounded. (In this way we found a hyperbolic element in $S^*$, i.e. a hyperbolic element in $S$ such that all its $\phi$-conjugates are hyperbolic.)

We set $A:= \tr(T_u) + \tr(T_v)$ and $B := \tr(T_u)$. Then
$$\ell(T_n) = 2 \arcosh \frac{\tr(T_n)}{2} = 2 \arcosh \frac{nA - B}{2} \stackrel{n\rightarrow \infty}\approx 2 \ln(nA - B).$$
Hence for big $n$, the translation direction of $T_n$ is asymptotic to 
$$ (2 \ln(n) + 2 \ln(A):\ldots: 2 \ln(n) + 2 \ln(|\phi_i(A)|)),$$
which converges towards $(1:\ldots:1)$. Thus $(1:\ldots:1)$ is in $P_{S^*}$ and hence $(1:\ldots:1)$ is in $P_\Gamma$.
\end{proof}

\begin{rem}
Examples of groups  with $r=2$ are the triangle groups $(5,\infty,\infty)$ and $(2,5,\infty)$. For them we have also that their limit cone $C_\Gamma$ is a subset of $P:=\{(x_1:x_2)\mid x_1\geq x_2\}$.  Even more, $C_\Gamma=P$ because $(1:1)\in C_\Gamma$ (Proposition~\ref{P:Parabolic}), $(1:0)\in C_\Gamma$ (the second projection of the modular embedding contains an elliptic isometry of infinite order) and the two points are connected (Theorem~\ref{T:ConvexAlg}). Therefore their limit set is ``half'' of the geometric boundary of $(\bbH^2)^2$.
\end{rem}
 
\begin{proof}[Alternative proof of Theorem~\ref{T:BigLimitSet} in the case with parabolic isometries]
As in the proof of Theorem~\ref{T:BigLimitSet} it is enough to show that $P_\Gamma$ contains an $(r-1)$-dimensional ball.

Using Lemma~\ref{L:OneElliptic} we can construct elements $g_2,\ldots,g_r \in \Gamma$ such that $g_{ii}$ is elliptic for $i=2,\ldots,r$ and $g_{ik}$ is hyperbolic for $i\neq k$. Hence each translation directions $L(g_i)$, which is in the limit cone of $\Gamma$, has a coordinate $0$ at the $i$-th place. 

By Proposition~\ref{P:Parabolic}, $P_\Gamma$ and therefore the limit cone of $\Gamma$ contains the point $(1:\ldots:1)$. We can not represent $(1:\ldots:1)$ as a linear combination of $L(g_2),\ldots,L(g_r)$ because, as remarked before, for $L(g_i)=(x_{i1}:\ldots:x_{ir})$ we have $x_{i1}>x_{ij}$ for all $j=2,\ldots,r$ and $i=2,\ldots,r$. Therefore the limit cone of $\Gamma$, which is convex by Lemma~\ref{T:ConvexAlg}, contains the convex hull of $r$ linearly independent points and hence its interior $P_\Gamma$ contains an $(r-1)$-dimensional ball.
\end{proof}
\begin{rem}
If $\Gamma$ does not contain parabolic elements, then from the above proof it follows that $\Lim_\Gamma$ contains a subset homeomorphic to $D^{2r-2}$.
\end{rem}

\end{document}